\documentclass[11pt]{article}
\usepackage[english]{babel}
\usepackage[utf8x]{inputenc}
\usepackage[T1]{fontenc}
\usepackage{color}
\usepackage{hyperref}
\usepackage{tcolorbox}
\usepackage{amssymb,amsmath,amsthm,amsfonts,latexsym}
\usepackage{bm,fullpage}
\newtheorem{theorem}{Theorem}

\allowdisplaybreaks[3]
\DeclareMathOperator{\bfx}{\mathbf{x}}

\DeclareMathOperator{\bff}{\mathbf{f}}

\DeclareMathOperator{\prox}{\mathbf{prox}}
\DeclareMathOperator{\bfg}{\mathbf{g}}
\DeclareMathOperator{\bfs}{\mathbf{s}}

\DeclareMathOperator{\bfa}{\mathbf{a}}

\DeclareMathOperator{\bfd}{\mathbf{d}}
\DeclareMathOperator{\bfr}{\mathbf{r}}
\DeclareMathOperator{\bfz}{\mathbf{z}}
\DeclareMathOperator{\bfy}{\mathbf{y}}
\DeclareMathOperator{\R}{\mathbb{R}}
\DeclareMathOperator{\A}{\mathcal{A}}
\DeclareMathOperator{\N}{\mathbb{N}}
\DeclareMathOperator{\F}{\mathcal{F}}

\def\bpm{\begin{pmatrix}}
\def\epm{\end{pmatrix}}

\DeclareMathOperator{\la}{\langle}
\DeclareMathOperator{\ra}{\rangle}

\DeclareMathOperator{\new}{new}
\DeclareMathOperator{\nemi}{nemi}
\DeclareMathOperator{\LY}{LY}
\DeclareMathOperator{\poly}{poly}
\newcommand{\bfsbar}{\bar{\mathbf{s}}}
\newcommand{\FmuL}{\F_{\mu,L}}
\newcommand{\FmuLRn}{\FmuL (\R^n)}

\usepackage{mathtools}
\DeclarePairedDelimiterX{\inn}[2]{\langle}{\rangle}{#1, #2}
\usepackage{physics}	
\usepackage{graphicx}	
\usepackage{xr,url,cite}
\usepackage[algoruled]{algorithm2e} 
\usepackage{booktabs} 
\usepackage{float}		
\usepackage[symbol]{footmisc} 

\usepackage[normalem]{ulem}
\usepackage{cancel}

\makeatletter
\newcommand{\specificthanks}[1]{\@fnsymbol{#1}}
\makeatother

\flushbottom

\def\be{\begin{equation}}
\def\ee{\end{equation}}
\def\bi{\begin{itemize}}
\def\ei{\end{itemize}}


\begin{document}
\title{Analysis of Optimization Algorithms via Sum-of-Squares}

\author{
	Sandra~S.~Y.~Tan  \and Antonios Varvitsiotis \and Vincent~Y.~F.~Tan\thanks{  Sandra~S.~Y.~Tan was with the   Department of Electrical and Computer Engineering, National University of Singapore    (\url{sandra_tsy@u.nus.edu}). 
 Antonios Varvitsiotis was with the Department of Electrical and Computer Engineering and  Department of Industrial Systems Engineering and Management, National University of Singapore (\url{avarvits@gmail.com}).
	Vincent Tan is with the   Department of Electrical and Computer Engineering and  Department of Mathematics, National University of Singapore   (\url{vtan@nus.edu.sg}).}
}
	

\maketitle

\begin{abstract}
We introduce a new framework for unifying and systematizing the performance analysis of first-order black-box optimization algorithms for unconstrained convex minimization. 
The low-cost iteration complexity enjoyed by first-order  algorithms renders them particularly relevant for applications in machine learning and large-scale data analysis. 
Relying  on sum-of-squares (SOS) optimization, we  introduce a hierarchy of semidefinite programs  that  give increasingly better   convergence bounds for higher levels of the hierarchy. 
Alluding to  the power of the  
SOS hierarchy,  we show that the (dual of the) first level corresponds to  the Performance Estimation Problem   (PEP)
 introduced  by Drori and Teboulle  [{\em Math.\ Program.}, 145(1):451--482, 2014],  a powerful framework for   determining convergence  rates of   first-order  optimization algorithms.  Consequently, 
 many results obtained within the PEP framework can be reinterpreted as degree-1 SOS proofs, and thus, the SOS  framework
provides  a  promising  new approach for   certifying  
improved  rates of convergence  by means of higher-order SOS certificates. To determine analytical rate bounds, in this work we  use the first level of the SOS hierarchy  and derive  new  result{s} for noisy gradient descent with   inexact line search methods (Armijo, Wolfe,  and Goldstein). 
\end{abstract}

\section{Introduction}
The pervasiveness of machine learning and big-data analytics throughout most academic fields and industrial domains has triggered renewed interest in convex optimization, the subfield of mathematical optimization that is concerned with minimizing a convex objective function over a convex set of decision variables. Of particular relevance for solving large-scale convex optimization problems with low accuracy requirements are first-order algorithms, defined as iterative algorithms that only use (sub)gradient information. 

There exists extensive literature on the convergence analysis of first-order optimization algorithms with respect to various performance metrics; see, e.g., \cite{nonlinear_dimitri, boyd, CG_nstepquad, CGoverview} and the references therein. 
However, existing convergence results typically rely on case-by-case analyses and cannot be understood  by a common guiding principle. In this work we introduce a unified framework for deriving worst-case upper bounds
 on the convergence rates of first-order  optimization algorithms, through the use of sum-of-squares (SOS) optimization.

SOS optimization is an active research area with important practical applications; see,~e.g.,~\cite{BPT, SOS-Parrilo, SOS-Lasserre, SOS-Laurent, SOStextbook}. The key idea underlying SOS optimization is to use  semidefinite programming (SDP) relaxations for certifying the nonnegativity of a polynomial over a set defined by polynomial (in)equalities.  This allows to construct hierarchies of SDPs that approximate the optimal value of arbitrary polynomial optimization problems.

To illustrate the main ingredients of our approach, consider the problem of  
minimizing a convex function $f:\R^n\to \R$ over $\R^n$, 
i.e., 
$\min_{\bfx \in \R^n} f(\bfx),
$ and let $\bfx_*$ be a global minimizer. Any solution strategy  
 entails choosing a black-box algorithm $\A$ that generates a sequence of iterates~$\{\bfx_k\}_{k\ge 1}$.
Our goal is then to estimate the worst-case convergence rate of $\A$ with respect to a fixed family of functions $\F$ and an appropriate measure of performance (e.g., distance to optimality $\|\bfx_k-\bfx_*\|$ or objective function accuracy $f(\bfx_k)-f(\bfx_*))$.
For concreteness, using as performance metric the objective function accuracy and a first-order algorithm $\A$ that does not increase the objective function value at each step,   we seek to solve the following optimization~problem: 
\begin{equation}\label{eq:pepintro}
\begin{aligned} 
t_*=\text{minimize} 
\ \ &  t\\
\text{subject to} \ \ & f_{k+1} - f_*\le t(f_k - f_*), \\
& \bfx_{k+1}=\A\left( \bfx_0,\dots,\bfx_k; f_0,\dots,f_k; \bfg_0,\dots, \bfg_k\right), \\
& \text{ for all } f\in \mathcal{F},
\end{aligned}
\end{equation}
where we set $f_k = f(\bfx_k)$ and $\bfg_k = \nabla f(\bfx_k)$ for all $k\ge 1$. 
As the optimization problem \eqref{eq:pepintro} is hard in general, we relax it into a tractable convex program (in fact, an SDP), in two steps. In the first step, we derive necessary conditions that are expressed as polynomial inequalities ${h_1(\bfz)\ge 0}, \ldots, {h_m(\bfz)\ge 0} $ and equalities $v_1(\bfz)=0, \ldots, v_{m'}(\bfz)=0$, in terms of the variables in
$\bfz=(f_*, f_k, f_{k+1},\bfx_*,\bfx_k, \bfx_{k+1}, \bfg_*, \bfg_k, \bfg_{k+1})$,
which are dictated by the choice of the algorithm and the corresponding class of functions. Having identified these necessary polynomial constraints, the first relaxation of the optimization problem~\eqref{eq:pepintro} is to find the minimum $t\in (0,1)$ such that the polynomial $t(f_k - f_*)-(f_{k+1} - f_*)$ is nonnegative over the semi-algebraic set 
$${K=\{\bfz : h_i(\bfz)\ge 0, \   i\in [m], \ v_j(\bfz)=0, \  j\in [m'] \}},$$ where here and throughout we use the notation $[m]=\{1,\dots, m\}$.
Nevertheless, as this second problem is also hard in general, in the second step we further relax this constraint by demanding that the nonnegativity of the polynomial $t(f_k - f_*)-(f_{k+1} - f_*) $ over $K$ is certified by an SOS decomposition: 
\begin{equation}\label{eq:putinarintro}
t(f_k - f_*)-(f_{k+1} - f_*)  = \sigma_0(\bfz) + \sum_{i=1}^m \sigma_i(\bfz)h_i(\bfz) + \sum_{j=1}^{m'} \theta_j(\bfz)v_j(\bfz), 
\end{equation}
where the {$\sigma_i(\bfz)$'s} are SOS polynomials and the {$\theta_j(\bfz)$'s} are arbitrary polynomials. Clearly, expression~\eqref{eq:putinarintro} certifies that $t(f_k - f_*)-(f_{k+1} - f_*) $ is nonnegative over the semi-algebraic set $K$. Furthermore, once the degree of the $\sigma_i$'s and the $\theta_j$'s has been fixed, the problem of finding the least $t\in (0,1)$ such that~\eqref{eq:putinarintro} holds is an instance of an SDP, and thus, it can be solved efficiently.

\subsection{Related Work}\label{sec:related}

\paragraph{Performance Estimation Problem. }
Our work was  motivated by the recent framework 
introduced by Drori and Teboulle \cite{Drori2014} that  
casts the search for worst-case rate bounds as an  infinite-dimensional optimization problem:
\begin{equation} \tag{PEP}
\begin{aligned} 
\underset{f, \bfx_0,\ldots,\bfx_N,\bfx_*}{\text{maximize}} \ \ &  f(\bfx_N)-f(\bfx_*)\\
\text{subject to} \ \ &f\in \F, \\
& \bfx_{k+1}=\A\left( \bfx_0,\dots,\bfx_k; f_0,\dots,f_k;  \nabla f(\bfx_0),\dots,  \nabla f(\bfx_k)\right), \  0\le k\le N-1, \\
&  \bfx_* \text{ is a minimizer of $f$ on }\R^n,
 \  \|\bfx_0-\bfx_*\|\le R,\\
& \bfx_0,\ldots,\bfx_N, \bfx_*\in \R^n,
\end{aligned}
\end{equation}
called the \emph{Performance Estimation Problem} (PEP).  A series of recent works has  highlighted the   PEP  as an extremely useful tool in  various settings,  including the study of worst-case guarantees for first-order optimization algorithms~\cite{Drori2014,ProxGrad,Taylor,deKlerk,TB19,THG17,dKGT17}, the design of optimal methods~\cite{Drori2014,KF18b,KF15, DT16, DT19}, and the study of  worst-case guarantees for solving  monotone inclusion problems~\cite{RTBG, Kim19,GY19,L19}.  
The PEP captures the worst-case objective function accuracy over all functions within $\F$, after $N$ iterations of the algorithm $\A$ from any starting point $\bfx_0$, which is within distance $R$ from some minimizer~$\bfx_*$.

Although  the PEP is infinite-dimensional (as its search space includes all functions in the class $\F$),  it can be transformed into an equivalent finite-dimensional problem using the {\em (smooth) convex interpolation} approach introduced in \cite{Taylor}. 
Following \cite{Taylor}, the functional constraint  $f\in\F$ is discretized by introducing $2(N+2)$ additional variables capturing the value and the gradient of the function at the points $\bfx_0,\ldots,\bfx_N,\bfx_*$. Specifically, setting $I=\{0,1,\ldots,N,*\}$,  the finite-dimensional problem 
\begin{equation} \tag{f-PEP}\label{f-PEP}
\begin{aligned} 
\underset{\{\bfx_i,\bfg_i,f_i\}_{i\in I}}{\text{maximize}} \ \ &  f_N-f_*\\
\text{subject to} \ \ & \exists f\in \F \text{ such that } f_i=f(\bfx_i), \ \bfg_i=\nabla f(\bfx_i) \ \text{ for all }  i\in I,\\
& \bfx_{k+1}=\A\left( \bfx_0,\dots,\bfx_k; f_0,\dots,f_k; \bfg_0,\dots, \bfg_k\right), \ k=0,\ldots,N-1,  \\
&  \bfg_*=0, \  \|\bfx_0-\bfx_*\|\le R,
\end{aligned}
\end{equation}
with decision  variables $ \{\bfx_i,\bfg_i,f_i\}_{i\in I}$, is equivalent to the PEP in the sense that their optimal values coincide and an optimal solution to the PEP can be transformed to an optimal solution to the f-PEP (and conversely).

The seemingly simple step of reformulating the PEP into  f-PEP by discretizing  and  introducing interpolability constraints leads naturally to a powerful approach for evaluating (or upper bounding) the value of  the f-PEP. 
Specifically,  if interpolability with respect to $\F$ and the iterates generated by $\mathcal{A}$
 satisfy conditions that are linear in $\bff=(f_0, \ldots,f_N,f_*)$ and   the  entries of the Gram matrix  $G = X^\top X$, where $X = (\bfx_0 \ \dots \ \bfx_N \ \bfx_* \ \bfg_0 \ \dots$ $\ \bfg_N \ \bfg_*) \in \R^{n \times 2(N+2)}$,
the
 value of the f-PEP is upper bounded by the  SDP defined by all necessary functional and algorithmic constraints, 
as well as the appropriate reformulations in terms of $\bff$ and $G$ of the optimality condition $\bfg_* = 0$ and the initialization condition
$\|\bfx_0-\bfx_*\| \leq R$. 

Moreover, in the case where interpolability with respect to $\F$ and the first-order method under consideration are both  linearly Gram-representable, i.e.,   exactly characterized by a finite number of constraints   that are linear in $\bff$ and   in the  entries of $G$, the corresponding SDP relaxation of f-PEP is tight, for large enough values of $n$. 

 Interpolability conditions 
  have been formulated exactly for various function classes, including the class of $L$-smooth and $\mu$-strongly convex functions \cite[Theorem~5]{Taylor}, indicator and support functions \cite[Section~3.3]{THG17}, smooth and nonconvex functions \cite[Section~3.4]{THG17}. In terms of the tightness of the SDP relaxation of the f-PEP, in the case where $\F$ is one of the aforementioned  function classes, and the corresponding algorithm is a fixed-step linear first-order method as defined in \cite[Definition~2.11]{THG17}
the SDP relaxation is tight, as long as $2(N+1) \leq n$ \cite[Proposition~2.6]{THG17}.

\paragraph{Integral Quadratic Constraints. }
A  competing    approach that uses SDPs to analyze iterative optimization algorithms was introduced in \cite{lessard}.
In this setting,  the minimizers of the function of interest are mapped to the fixed points of a discrete-time linear dynamical system with a nonlinear feedback law, whose convergence is then analyzed using integral quadratic  constraints (IQCs). 
The IQC approach allows one to derive analytical and numerical upper bounds on the convergence rates for various algorithms by solving small SDPs.
For instance, in \cite{lessard}, algorithms considered include the gradient method, the heavy-ball method, Nesterov's accelerated method (and related variants) applied to smooth and strongly convex functions.

The line of research initiated in \cite{lessard} has been generalized further in various directions. Some notable  examples include the convergence analysis of the ADMM method \cite{NLRPJ}, the case of non-strongly convex objective functions \cite{Faz18}, the generalization to stochastic algorithms \cite{HSL}, and the design of  first-order optimization algorithms \cite{Faz18sos}. In addition, an approach drawing upon ideas from both the PEP and IQC frameworks, and comparison between these, was proposed in \cite{TVSL}.

\subsection{Summary of Results} 
In most instances where the PEP framework was applied in the literature, close inspection of the proofs of the analytic worst-case bounds reveals that they can be reinterpreted as simple, i.e., low-degree SOS certificates; see, e.g., \cite[Appendix A]{ProxGrad}, \cite[Section 3.6]{Taylor}, and~\cite[Section 4.1]{deKlerk}. 
This observation is the point of departure for  our work, whose aim is to unify the aforementioned results, and additionally, to make the search for the underlying SOS certificates~explicit.

As it turns out, the connection between the PEP and the SOS framework  is an instance of SDP duality. Specifically, we have mentioned that relaxing the f-PEP into an SDP requires  the functional and algorithmic constraints to imply linear constraints of  the form
\be\label{const1}
		\la c_i,\bff\ra +  \inn{C_i}{G} \geq a_i \text{\quad or \quad } \la d_j,\bff\ra + \inn{D_j}{G} = b_i,
		\ee
		for appropriate vectors $c_i, d_j$, matrices $C_i,D_j$ and scalars $a_i, b_j$. 
Nevertheless, notice that an equivalent way expressing   the constraints in  \eqref{const1} is as {\em polynomial constraints} in the variables $f_*, f_k, f_{k+1}, \bfx_*, \bfx_k, \bfx_{k+1}, \bfg_*, \bfg_k, \bfg_{k+1}$. Specifically, setting $\bfz_0 = (f_*,f_k,f_{k+1})$ and $\bfz_\ell = (x_*(\ell),x_k(\ell), x_{k+1}(\ell), g_*(\ell), g_k(\ell), g_{k+1}(\ell))$, where we use $x_*(\ell)$ to denote the $\ell$\textsuperscript{th} coordinate of $\bfx_*$ for $\ell\in [n]$, 
the constraints in   \eqref{const1} may be equivalently expressed as
\begin{equation}\label{const3}
	\inn{c_i}{\bfz_0} + \sum_{\ell=1}^n \bfz_\ell^\top C_i \bfz_\ell \geq a_i \quad\text{or} \quad \inn{d_j}{\bfz_0} + \sum_\ell \bfz_\ell^\top D_j \bfz_\ell = b_i,
\end{equation}
i.e., as polynomials in the variables  $\bfz_0, \bfz_1, \ldots, \bfz_n$, to which we apply the  SOS~framework.  
Formalizing this connection, in Theorem \ref{sosvspep}  we show  that the dual of the first level of the SOS hierarchy is equivalent  to the PEP  when the  functional and algorithmic constraints are linearly Gram-representable.    This      allows to reinterpret  existing  rate bounds derived within the PEP framework as~{degree-1} SOS certificates.

 Nevertheless, despite its many successful applications, the PEP framework does not offer a systematic way  by which the SDP relaxation can be strengthened when  the function class under consideration  or the employed algorithm  are  not linearly Gram-representable.  Indeed, recall  that to go from the PEP to an SDP we   take  {\em two relaxation steps}. In the first step we extract necessary  (quadratic) conditions  that are dictated by the   interpolability    with respect to $\mathcal{F}$ and the algorithm~$\mathcal{A}$. In the second step, we use the identified conditions to formulate an SDP, which  gives the desired rate bounds.  Now, it is clear that if the first relaxation step is loose, then  the value of the SDP is not necessarily equal to the value of the PEP. In such a setting, there is no systematic way to strengthen  the PEP-SDP, whereas,  the sum-of-squares approach clearly provides a solution: just consider a higher level of the hierarchy. This is exactly why we believe that the sum-of-squares approach is an interesting and  complementary approach to the Gram matrix approach of Taylor et al. \cite{Taylor}.

On the other hand, the SOS approach  provides a systematic framework for finding   better (i.e., smaller) bounds on the worst-case contraction factor of descent algorithms, by using higher levels of the SOS hierarchy.  It is worth noting though that this flexibility comes at a computational cost, in the sense that the SDPs obtained via the SOS hierarchy are dimension-dependent, i.e., 
any performance certificate generated by the model only applies to functions over
a domain with a fixed dimension $n$. 

To overcome this issue,  we show in Theorem~\ref{lifting}  that in the specific setting studied in this work (cf. Section \ref{sec:choices}),  a degree-1 certificate for the univariate case  (i.e. $n=1)$ can be {\em lifted} to a degree-1 certificate for the general case $(n>1$). Nevertheless, we have  been unable  to extend this lifting procedure  for higher-order SOS certificates. As our goal is to identify analytic rates, the inability to work with general $n$ has forced us to only consider degree-1 certificates. We leave the consideration of higher degree certificates to future~work.

In terms of   using the SOS  hierarchy to derive new convergence results,  we focus   on  gradient descent  applied  to $L$-smooth, $\mu$-strongly convex functions, where the step size is chosen using  inexact line search methods. 
Specifically,   in Theorem~\ref{thm:armijo_result}, Theorem~\ref{thm:goldstein_result} and Theorem~\ref{thm:wolfe}  we respectively study  the  Armijo, Wolfe, and Goldstein conditions with step size selection in both the noisy and noiseless settings. Denoting by   $\delta \in [0,1)$  the noise level in the gradient  estimation  (see \eqref{eq:noise}),
our main results are the following  rate bounds:

\medskip
\noindent {\em Gradient descent with Armijo-terminated line search:}
\begin{equation*}
	f_{k+1} - f_* \leq \left[1 - \frac{4\mu\epsilon(1-\delta)^2}{\eta L}\left(\frac{1-\delta}{(1+\delta)^2} - \epsilon \right)\right] (f_k - f_*),
\end{equation*}
which is valid for any noise level $\delta\in [0,1)$,  algorithm parameters  $\epsilon \in \left(0, \frac{1-\delta}{(1+\delta)^2}\right)$ and~{$\eta>1$}.

\medskip
\noindent {\em Gradient descent  with Goldstein-terminated line search:}
	$$f_{k+1} - f_* \leq \left(1 - \frac{4\mu\epsilon(1-\delta)^2}{L}\left[\frac{1-\delta}{(1+\delta)^2} - (1-\epsilon)\right]\right) (f_k - f_*),$$
	which is valid for noise levels  $\delta \in [0,\sqrt{5}-2)$ and  algorithm parameter $\epsilon \in~\left(1-\frac{1-\delta}{(1+\delta)^2},\frac{1}{2} \right)$.

\medskip
\noindent {\em Gradient descent  with Wolfe-terminated line search:}
$$f_{k+1} - f_* \leq \left(1 - \frac{2\mu c_1(1-c_2)}{L}\right) (f_k - f_*),$$
 which is valid for any algorithm parameters $0 < c_1 < c_2 < 1$.
 
	 We show that the  bound for GD with  Armijo-terminated line search rule  is an improvement upon two existing bounds in the literature, see \cite[Proposition~3.3.5]{nemirovski} and \cite[Page~239]{ye}. On the other hand, 
our  results for GD with Goldstein  or Wolfe-terminated line search   are, to the best of our knowledge,~new.

The interested reader may find the code for numerically and symbolically verifying the results at \url{https://github.com/sandratsy/SumsOfSquares}.

\paragraph{Paper Organization.} The paper is organized as follows: Section~\ref{sec:SOS} introduces the SOS technique, explains how it is applied to derive worst-case bounds and describes the function class and algorithms we examine within this work. Furthermore, we determine a procedure for lifting degree-1 certificates from the univariate to the multivariate case and also prove the relation between PEP and SOS. In Section 3 we use the SOS framework to determine 
  new convergence result{s} for noisy gradient descent with   inexact line search methods (Armijo, Wolfe,  Goldstein). 
Lastly, Section~\ref{sec:conc} contains concluding remarks and suggests avenues for future work.

\paragraph{Note.} A preliminary version of this  paper was presented at the Signal Processing with Adaptive Sparse Structured Representations (SPARS) workshop in Toulouse, France in July 2019 \cite{spars}. Moreover, several additional convergence results obtained via the SOS approach including   GD with constant step size and  exact line search, and 
 proximal gradient with constant step size and  exact line search can be found in the M. Eng. thesis of the first author \cite{sandra}. These results have not been included in this manuscript as the exact same rates have been also derived  via the PEP framework, which as already discussed, is equivalent to degree-1 SOS proofs.  

\section{Description of our Approach}\label{sec:SOS}
\subsection{Background on Sum-of-Squares}
Before we provide the details of our approach, we need to introduce some necessary notation and definitions. 
For any $\bfa \in \N^n$, where $\N$ is the set of nonnegative integers, we denote by $\bfz^{\bfa}$ the monomial $z_1^{a_1}\dots z_n^{a_n}$.
The degree of the monomial $\bfz^{\bfa}$ is defined to be $|\bfa|=\sum_{i=1}^na_i$.
Let $\R[\bfz]_{n,d}$ denote the set of polynomials in $n$ variables $z_1,\ldots,z_n$, of degree at most $d$.
Any polynomial $p(\bfz) \in \R[\bfz]_{n,d}$ can be written as a linear combination of monomials of degree at most $d$, i.e., 
$	p(\bfz)=\sum_{|\bfa| \le d} p_{\bfa}\bfz^{\bfa}. $ 
An (even-degree) polynomial $p(\bfz)$ is called a {\em sum-of-squares} (SOS)
if there exist  polynomials $q_1(\bfz),\dots,q_m(\bfz) $ satisfying
$p(\bfz) = \sum_{i=1}^m q_i^2(\bfz). 
$ Note that if the degree of $p(\bfz)$ is equal to $2d$, all polynomials  $q_i(\bfz)$  will necessarily have degree at most $d$. 
It is instructive to think of the existence of an SOS decomposition as a tractable certificate for the global nonnegativity of $p(\bfz)$. Indeed, it is clear  that any SOS polynomial $p(\bfz)$ is also globally nonnegative, i.e., $p(\bfz)\ge 0$ for all $\bfz \in \R^n$. Furthermore, although less obvious, it is well-known that checking the existence of an SOS decomposition can be done efficiently using SDPs~\cite{SOStextbook}. 

%


Moving beyond the problem of certifying global nonnegativity, a more general problem 
is to certify the nonnegativity of a polynomial $p(\bfz)$ over a (basic) closed semi-algebraic~set 
$K = \left\{ \bfz\in \R^n :  \ h_i(\bfz) \ge 0,  \ i \in [m],  \  v_j(\bfz) = 0, \ j\in [m'] \right\},
$
i.e., to certify that $p(\bfz)\ge 0$ for all $\bfz\in K$.  Analogously to the case of global nonnegativity, we look for certificates that can be found efficiently using SDPs. One such choice are Putinar-type certificates \cite{putinar}, given~by:
\begin{equation}\label{eq:putinar}
p(\bfz) = \sigma_0(\bfz) + \sum_{i=1}^m \sigma_i(\bfz)h_i(\bfz) + \sum_{j=1}^{m'} \theta_j(\bfz)v_j(\bfz), 
\end{equation}
where the $\sigma_i$'s are themselves SOS polynomials and the $\theta_j$'s are arbitrary (i.e., not necessarily SOS) polynomials.
Clearly, the expression~\eqref{eq:putinar} serves as a certificate that $p(\bfz) \geq 0$ for all $\bfz \in K$ and moreover, the existence of such a representation (for a fixed degree~$d$) can be done using SDPs, e.g., see \cite{SOStextbook}.
%


\subsection{Algorithm Analysis Using SOS Certificates}
Fixing a family of functions $\F$ and a first-order algorithm $\A$---one that uses only gradient information---our goal is to find the best (smallest) contraction factor $t\in (0,1)$ that is valid over all functions in $\F$ and all sequences of iterates that can be generated using the algorithm~$\A$. Concretely, for any fixed $k$, we want to estimate the minimum $t\in (0,1)$ satisfying $f_{k+1} - f_*\le t(f_k - f_*),$ for all $f\in \F$ and $\bfx_{k+1}=\A\left(\bfx_k, f_k, \bfg_k\right)$.
We address this question using SOS certificates. To employ an SOS approach, we first need to identify polynomial inequalities $h_i(\bfz)\ge 0$ and polynomial equalities $v_j(\bfz)=0$ in the variables 
\begin{equation}\label{eq:zvar}
\bfz:= (f_*, f_k, f_{k+1},\bfx_*,\bfx_k, \bfx_{k+1}, \bfg_*, \bfg_k, \bfg_{k+1})\in \R^{6n+3}
\end{equation}
that should be necessarily satisfied following the choice of the class of functions $\F$ and the first-order algorithm $\A$. Setting $K$ to be the semi-algebraic set
defined by the identified polynomial equalities and inequalities, i.e.,
\begin{equation}
K:=\left\{\bfz: h_i(\bfz)\ge 0, \ i\in [m],  \quad v_j(\bfz)=0, \ j\in [m'] \right\}, \notag
\end{equation}
it follows immediately that if the polynomial
\begin{equation}\notag
p_t(\bfz) := t(f_k - f_*) - (f_{k+1} - f_*) 
\end{equation}
is nonnegative over the set $K$ for some $t\in (0,1)$, then $t$ also serves as an upper bound on the worst-case rate $t_*$, or, in other words, $t_*$ is upper bounded by the value of the following optimization problem
\begin{equation}\label{eq:estimation}
t_{\poly}:= \inf \{ t: \ p_t(\bfz)\ge 0 \ \forall \bfz\in K,\  t\in (0,1)\},
\end{equation} 
where the decision variable is the scalar $t$. 
As the optimization problem \eqref{eq:estimation} involves a polynomial nonnegativity constraint (over a semi-algebraic set) it is in general hard---in fact, strongly NP-hard \cite{ahmadi}. To obtain tractable upper bounds, we replace the  constraint that $p_t(\bfz)$ is nonnegative over $K$ by asking that $p_t(\bfz)$ admits   an  SOS certificate of the form   \eqref{eq:putinar}, which clearly certifies nonnegativity over $K$.  Concretely, for any $d\ge 0$ and $n\geq 1$, we get the~SDP:
 \begin{equation}\label{eq:sdp}
\begin{aligned}t_d :=\text{minimize} \ \ & t\\
\text{subject to} \ \  & p_t(\bfz)=s_0(\bfz) + \sum_{i=1}^m \sigma_i(\bfz)h_i(\bfz) + \sum_{j=1}^{m'} \theta_j(\bfz)v_j(\bfz),\\
& t \in (0,1),\\
& \sigma_0(\bfz): \text{SOS polynomial with } \deg (\sigma_0(\bfz))\le 2d, \\
& \sigma_i(\bfz): \text{SOS polynomial with } \deg (\sigma_i(\bfz)h_i(\bfz))\le 2d, \\
& \theta_j(\bfz): \text{arbitrary polynomial with } \deg (\theta_j(\bfz)v_j(\bfz))\le 2d, 
\end{aligned}
\end{equation}
where $\bfz \in \R^{6n+3}$.
For any fixed integer $d\ge 0$ and $n\geq 1$, the optimization problem~\eqref{eq:sdp} is an SDP, and consequently, it can be solved  in polynomial-time to any desired accuracy.
Furthermore, for a fixed $n$, it follows immediately from the definitions~that
\begin{equation*}
	t_*\le t_{\poly}\le ...\le t_{d+1}\le t_d, \text{ for all } d \in \mathbb{N}. 
\end{equation*}
In other words, as $d$ increases, the  SDPs given in \eqref{eq:sdp} give increasingly better---more precisely, no worse---upper bounds on the worst-case ratio $t_*$. On the negative side, the sizes of these SDPs grow as $\mathcal{O}(n^d),$ so in practice, working with large values of $d$ is computationally prohibitive. 
Summarizing, our strategy for estimating the  worst-case rate  consists of the following~steps:
\begin{enumerate}
	\item Identify polynomial inequality and equality constraints ${h_i(\bfz) \geq 0}, \  v_j(\bfz) =~0$ in the variable $\bfz$ (recall \eqref{eq:zvar}) that are implied by choosing a function class and an algorithm.  
	\item Fix a degree $d\in \N$ for the SOS certificate, i.e., for the degrees of the polynomials $\sigma_i$'s and $\theta_j$'s. 
	Higher degree certificates  allow for tighter bounds but are more difficult to find due to the increase in size of the~SDP.
	\item Numerically solve the SDP in \eqref{eq:sdp} using degree-$d$ SOS certificates multiple times, varying the parameters corresponding to the algorithm and the function class.
	This allows us to ``guess'' the analytic form of the optimal variables for \eqref{eq:sdp}.
	\item Lastly, we verify that the identified solution from step 3 is indeed feasible for \eqref{eq:sdp}. Determining feasibility gives  an analytic upper bound on the best contraction factor $t_d$ that can certified using degree-$d$ SOS certificates.
\end{enumerate}

\paragraph{Implementation Details.}
Throughout this paper, we restrict our attention to {degree-1} SOS certificates, as our main goal is to derive  rates symbolically (see Section~\ref{sec:choices}). The derivation of the affine constraints defining the feasible region of the SDP \eqref{eq:sdp} was done 
by matching coefficients in \eqref{eq:putinar}.
The SDP \eqref{eq:sdp} was solved with CVX \cite{cvx1,cvx2}, using the supported SDP solver SDPT3 \cite{SDPT3,SDPT3-2}. 
Fortunately, there are many SOS optimization toolboxes such as YALMIP that automate the process of matching coefficients and constructing the SDP. 
Finally, verification of the identified solution was done through MATLAB's Symbolic Math Toolbox and Mathematica \cite{Mathematica}. Mathematica was used to first verify that the optimal  matrices  are PSD, before we found their corresponding SOS decompositions analytically.
For the interested reader, the codes for implementation of the SDPs and verification of the solutions may be found at \url{https://github.com/sandratsy/SumsOfSquares}.

\subsection{Choices Specific to this Work}\label{sec:choices}

\paragraph{Function classes of interest.} Consider parameters $0\le\mu < L < +\infty$.
In this work, we only consider the class of \emph{$L$-smooth, $\mu$-strongly convex}  functions---also known as {\em $(\mu,L)$-smooth functions}---with domain $\R^n$, which we denote by $\FmuLRn$. 
Recall that a proper, closed, convex function ${f:\R^n \rightarrow \R \cup \{+\infty\}}$ is called {\em $L$-smooth} if  
\begin{equation*}
	\norm{\bfg_1 - \bfg_2} \leq L\norm{\bfx_1 - \bfx_2},  \ \forall \bfx_1, \bfx_2 \in \R^n, \ \bfg_1= \nabla f(\bfx_1), \bfg_2= \nabla f(\bfx_2),
\end{equation*}
and  {\em $\mu$-strongly convex} if the function $	f(\bfx) - \frac{\mu}{2}\norm{\bfx}^2 $  is convex, where $\|\cdot \|$ denotes the usual Euclidean norm. 

Throughout this work, we  use the following set  of necessary and sufficient conditions developed in \cite{Taylor} for the existence of a function in $\FmuLRn$ generating data triples $\{(\bfx_i, f_i, \bfg_i)\}_{i \in I}$. 

\begin{theorem}\label{thm:interpolability}
	Given a set $\{(\bfx_i, f_i, \bfg_i)\}_{i \in I}$, there exists $f \in \FmuLRn$ where $f_i = f(\bfx_i)$ and $\bfg_i =\nabla f(\bfx_i)$ for all $i\in I$,  if and only~if,	for all $i\ne j\in I$:
	{\small \begin{equation*}f_i - f_j - \bfg_j^\top (\bfx_i - \bfx_j)  \geq \frac{L}{2(L - \mu)}\left( \frac{1}{L}\norm{\bfg_i - \bfg_j}^2  + \mu\norm{\bfx_i - \bfx_j}^2 - 2\frac{\mu}{L}(\bfg_j - \bfg_i)^\top (\bfx_j - \bfx_i) \right). \end{equation*}}
\end{theorem}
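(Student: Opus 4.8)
The plan is to prove both directions of the $\FmuL$-interpolability characterization. The ``only if'' direction is the easy one: suppose $f \in \FmuLRn$ and $f_i = f(\bfx_i)$, $\bfg_i \in \partial f(\bfx_i)$. Then I would recall the standard two-point inequality characterizing $L$-smooth, $\mu$-strongly convex functions (a co-coercivity-type estimate). Specifically, for such $f$ one has
\[
f(\bfx) \geq f(\bfy) + \bfg_\bfy^\top(\bfx-\bfy) + \frac{1}{2L}\norm{\bfg_\bfx - \bfg_\bfy}^2 + \frac{\mu L}{2(L-\mu)}\norm{\bfx-\bfy - \tfrac{1}{L}(\bfg_\bfx-\bfg_\bfy)}^2
\]
for all $\bfx,\bfy$ and all subgradients, and expanding the last square (and collecting terms) yields exactly the claimed inequality with $i=\bfx$, $j=\bfy$. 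The cleanest route to this two-point inequality: apply the standard interpolation inequality for the class $\F_{0,L-\mu}$ of $(L-\mu)$-smooth convex functions to the auxiliary function $\tilde f(\bfx) := f(\bfx) - \tfrac{\mu}{2}\norm{\bfx}^2$ (which is convex since $f$ is $\mu$-strongly convex, and $(L-\mu)$-smooth since $f$ is $L$-smooth), then substitute back $\bfg_i = \tilde{\bfg}_i + \mu\bfx_i$, $f_i = \tilde f_i + \tfrac{\mu}{2}\norm{\bfx_i}^2$ and simplify. So really everything reduces to the convex $L'$-smooth case, where the inequality $\tilde f_i - \tilde f_j - \tilde{\bfg}_j^\top(\bfx_i-\bfx_j) \geq \tfrac{1}{2L'}\norm{\tilde{\bfg}_i - \tilde{\bfg}_j}^2$ is the familiar co-coercivity of the gradient.

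For the ``if'' direction, I would construct an explicit interpolating function from the data. The natural candidate is a Moreau-envelope / infimal-convolution type construction: first handle the $\mu=0$ convex smooth case and then shift by $\tfrac{\mu}{2}\norm{\cdot}^2$. In the convex $L'$-smooth case, set $\hat f(\bfx) := \max_{i\in I}\{f_i + \bfg_i^\top(\bfx-\bfx_i) + \tfrac{1}{2L'}\norm{\bfg_i}^2\}$ minus a correction, or more robustly, take the convex conjugate route: define $g^*$ as the convex hull of the points $(\bfg_i, \bfg_i^\top \bfx_i - f_i)$ in conjugate space, take its Legendre transform, and then smooth. Actually the slick construction from Taylor--Hendrickx--Glineur is: let
\[
f(\bfx) := \max_{i\in I} \left\{ f_i + \bfg_i^\top(\bfx - \bfx_i) + \frac{1}{2L'}\norm{\bfg_i}^2 \right\}^{**} \text{-type smoothing},
\]
but I would instead just cite/reconstruct the known argument: one shows the quadratic upper models $q_i(\bfx) = f_i + \bfg_i^\top(\bfx-\bfx_i) + \tfrac{L'}{2}\norm{\bfx-\bfx_i}^2$ and the lower conjugate models are consistent precisely under the stated inequalities, and a suitable pointwise construction (infimal convolution of $\tfrac{L'}{2}\norm{\cdot}^2$ with the conjugate of the lower piecewise-linear model) produces an $L'$-smooth convex $f$ matching all the data. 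Then $f(\bfx) + \tfrac{\mu}{2}\norm{\bfx}^2$ lies in $\FmuLRn$ and interpolates the original data triples after the inverse substitution.

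I should note that this theorem is quoted verbatim from \cite[Theorem~5]{Taylor}, so in the paper the proof will simply be a reference; but if a self-contained argument is wanted, the two-sided scheme above is the way. The main obstacle is the explicit construction in the ``if'' direction: verifying that the candidate $f$ is simultaneously (i) well-defined and finite everywhere, (ii) exactly $L$-smooth (not merely smooth with some worse constant), and (iii) passes through every prescribed triple $(\bfx_i,f_i,\bfg_i)$ — the smoothness constant is the delicate part, and this is exactly where the precise coefficient $\tfrac{L}{2(L-\mu)}$ and the cross-term $-2\tfrac{\mu}{L}(\bfg_j-\bfg_i)^\top(\bfx_j-\bfx_i)$ in the hypothesis are forced. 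The reduction to $\mu=0$ via the quadratic shift is what keeps this manageable; without it one would be juggling both constants at once.
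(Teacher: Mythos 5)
You correctly observe that the paper contains no proof of Theorem~\ref{thm:interpolability}: the statement is imported verbatim from \cite{Taylor}, so there is no in-paper argument to compare against, and simply citing that reference (as you suggest) matches the paper's treatment exactly. Beyond that, your sketch of the necessity direction is correct and essentially complete: the shift $\tilde f(\bfx)=f(\bfx)-\frac{\mu}{2}\norm{\bfx}^2$ turns an element of $\FmuLRn$ into a convex $(L-\mu)$-smooth function, the standard co-coercivity inequality for the shifted data gives $f_i-f_j-\bfg_j^\top(\bfx_i-\bfx_j)\ \ge\ \frac{\mu}{2}\norm{\bfx_i-\bfx_j}^2+\frac{1}{2(L-\mu)}\norm{\bfg_i-\bfg_j-\mu(\bfx_i-\bfx_j)}^2$, and expanding the square reproduces the stated inequality; your intermediate two-point inequality expands to the same expression, so that step checks out. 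The sufficiency direction, however, is only gestured at: you name the construction (Legendre conjugation and infimal convolution with $\frac{L'}{2}\norm{\cdot}^2$, following Taylor--Hendrickx--Glineur) but do not carry out the verifications that the candidate function is finite everywhere, interpolates every triple $(\bfx_i,f_i,\bfg_i)$, and is exactly $L$-smooth and $\mu$-strongly convex---precisely the delicate point you flag yourself. Since the paper defers entirely to \cite{Taylor} for this result, your proposal is on par with (indeed more informative than) the paper's treatment; but as a self-contained proof it is incomplete in the ``if'' direction, where the clean route is the chain of reductions in \cite{Taylor}: the quadratic shift to reduce to $\mu=0$, conjugation to trade $L$-smoothness for $1/L$-strong convexity, and the explicit piecewise-affine maximum for the purely convex case.
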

Applying Theorem \ref{thm:interpolability} to the data triples $(\bfx_k,f_k,\bfg_k)$, $(\bfx_{k+1},f_{k+1},\bfg_{k+1})$, and $(\bfx_*,f_*, \bfg_*)$ we get    six polynomial constraints that we denote throughout this paper by 
$	h_1(\bfz) \geq 0, \ldots, h_6(\bfz) \geq 0. $
Specifically,   setting
  $\alpha:=\frac{1}{2(1 - \mu / L)}$, the six $\FmuL$-interpolability conditions are:
{\small
\begin{align}
&f_k\! -\!  f_{k+1} - \bfg_{k+1}^\top(\bfx_k - \bfx_{k+1}) - \alpha\left( \frac{1}{L}\norm{\bfg_k-\bfg_{k+1}}^2 + \mu\norm{\bfx_k-\bfx_{k+1}}^2  
\! -\!  2\frac{\mu}{L}(\bfg_{k+1}\! -\! \bfg_k)^\top(\bfx_{k+1}\! -\! \bfx_k) \right) \! \ge\!  0 \notag \\
&f_k - f_* - \bfg_*^\top(\bfx_k - \bfx_*) - \alpha\left( \frac{1}{L}\norm{\bfg_k - \bfg_*}^2 + \mu\norm{\bfx_k-\bfx_*}^2  - 2\frac{\mu}{L}(\bfg_*-\bfg_k)^\top(\bfx_*-\bfx_k) \right)\ge 0 \notag \\
&f_{k+1} - f_k - \bfg_k^\top (\bfx_{k+1} - \bfx_k) - \alpha\left(\frac{1}{L}\norm{\bfg_{k+1} - \bfg_k}^2 + \mu\norm{\bfx_{k+1} - \bfx_k}^2 - 2\frac{\mu}{L}(\bfg_k - \bfg_{k+1})^\top (\bfx_k - \bfx_{k+1}) \right)\ge 0 \notag \\
&f_{k+1} - f_* - \bfg_*^\top(\bfx_{k+1} - \bfx_*) - \alpha\left(\frac{1}{L}\norm{\bfg_{k+1} - \bfg_*}^2 + \mu\norm{\bfx_{k+1} - \bfx_*}^2  - 2\frac{\mu}{L}(\bfg_*-\bfg_{k+1})^\top (\bfx_* - \bfx_{k+1}) \right) \ge 0 \notag \\
&f_* - f_k - \bfg_k^\top (\bfx_* - \bfx_k) - \alpha\left( \frac{1}{L}\norm{\bfg_* - \bfg_k}^2 + \mu\norm{\bfx_* - \bfx_k}^2 - 2\frac{\mu}{L}(\bfg_k-\bfg_*)^\top (\bfx_k - \bfx_*) \right)\ge 0 \notag \\
&f_* \! -\!  f_{k+1} - \bfg_{k+1}^\top (\bfx_* - \bfx_{k+1}) - \alpha\left(\frac{1}{L}\norm{\bfg_* - \bfg_{k+1}}^2 + \mu\norm{\bfx_* - \bfx_{k+1}}^2\!  -\!  2\frac{\mu}{L}(\bfg_{k+1}\! -\! \bfg_*)^\top (\bfx_{k+1} \! -\!  \bfx_*) \right)\! \ge\! 0. \label{eq:interpolability}
\end{align}}

\subsection{Lifting Univariate Certificates}
As already mentioned,   we restrict our attention to degree-1 SOS certificates (recall~\eqref{eq:sdp}). In this setting, $\sigma_0(\bfz)$ is an SOS of linear polynomials and, since the polynomials $h_i(\bfz)$ and $v_j(\bfz)$ we consider are degree-2, the $\sigma_i(\bfz)$'s  need to  be degree-0 SOS polynomials and the $\theta_j(\bfz)$'s  degree-0 
polynomials.
 The SOS certificate can be thus expressed~as:
\begin{equation}\label{eq:putinardeg1}
	p(\bfz) = \sigma_0(\bfz) + \sum_{i=1}^m \sigma_i h_i(\bfz) + \sum_{j=1}^{m'} \theta_j v_j(\bfz),
\end{equation}
where $\sigma_i \in \R_+$ and $\theta_j \in \R$.
We claim that the form of the polynomials $p_t(\bfz)$, $h_i(\bfz)$'s and $v_j(\bfz)$'s, combined with the specific choice of SOS certificates under consideration (i.e., degree-1 certificates) allow us to only consider the univariate case $n=1$. Concretely, we show in the rest of this section that an SOS certificate for some contraction factor  $t\in (0,1)$ in the univariate case, induces  an  SOS certificate  for  the same contraction factor in  the multivariate case ($n>1$).
To see this, first we rearrange the variable $\bfz= (f_*, f_k, f_{k+1},\bfx_*,\bfx_k, \bfx_{k+1}, \bfg_*, \bfg_k, \bfg_{k+1})$ as $\bfz=(\bfz_0, \bfz_1, \ldots, \bfz_n),$~where
\begin{equation}\label{eq:zvar2}
	\bfz_0=  (f_*, f_k, f_{k+1}) \ \text{ and }\  \bfz_\ell = (x_*(\ell),x_k(\ell), x_{k+1}(\ell), g_*(\ell), g_k(\ell), g_{k+1}(\ell)), \ \ell\in [n]
\end{equation}
where  $x_*(\ell)$ denotes  the $\ell$\textsuperscript{th} coordinate of $\bfx_*$ for $\ell\in [n]$. 

\begin{theorem}\label{lifting}
Assume that the  performance measure polynomial and the constraint functions   are separable with respect to the blocks of variables $\bfz_0, \bfz_1,\ldots,\bfz_n$, they are invariant with respect to permutations of the blocks of variables $\bfz_1,\ldots,\bfz_n$, and that they  have no  constant terms.
Then, a degree-1 SOS certificate for a rate $t\in (0,1)$ in the univariate case (i.e., $n=1$) can be lifted to degree-1 certificate for the general case (i.e., $n>1$).

\end{theorem}

Note that the structural assumptions on the performance measure polynomial and the constraint functions imply that they have the form
\begin{equation}\label{polynomials}
\begin{aligned}
p_t(\bfz) &= p_t^0(\bfz_0) + \sum_{\ell=1}^n p_t^1(\bfz_\ell) \\
h_i(\bfz) &= h_i^0(\bfz_0) + \sum_{\ell=1}^n h_i^1(\bfz_\ell) \\
v_j(\bfz) &= v_j^0(\bfz_0) + \sum_{\ell=1}^n v_j^1(\bfz_\ell),
\end{aligned}
\end{equation}
for some polynomials $p_t^0, p_t^1, h_i^0, h_i^1, v_j^0$ and $v_j^1$.

Furthermore, note that all the performance measure polynomials  (e.g., $t(f_k - f_*)- (f_{k+1} - f_*)$ and $t\norm{\bfx_k-\bfx_*}^2 - \norm{\bfx_{k+1} - \bfx_*}^2$) and the constraint functions encountered thus far have the form~\eqref{polynomials}.
 Furthermore,  \eqref{polynomials} is satisfied when the polynomial constraints take the form given in \eqref{const3}, i.e, the constraints are linear in the $f$'s and in the inner products of the $\bfx_i$'s and $\bfg_i$'s.

\begin{proof}
To prove the theorem, note that an SOS  certificate $\left\{Q, \{\sigma_i\}_i, \{\theta_j \}_j \right\}$, (i.e.,  $Q$ is a PSD matrix, $\{\sigma_i \}_i \subseteq \R_+$ and $\{\theta_j \}_j \subseteq \R$) for a rate  $t\in (0,1)$ in the general case $n>1$ has the following form:
\begin{equation}\label{sdvdfgbg}
p_t(\bfz)=\begin{pmatrix}1\\ \bfz_0\\ \bfz_1\\ \vdots\\ \bfz_n\end{pmatrix}^\top Q\begin{pmatrix} 1\\\bfz_0\\ \bfz_1\\ \vdots\\\bfz_n\end{pmatrix}+\sum_{i=1}^m \sigma_i\left( h^0_i(\bfz_0)+\sum_{\ell=1}^n h^1_i(\bfz_\ell)\right)+\sum_{j=1}^{m'} \theta_j \left( v_j^0(\bfz_0) + \sum_{\ell=1}^n v_j^1(\bfz_\ell)  \right).
\end{equation}
As the polynomials have no constant terms, it follows immediately that $Q_{11}=~0$. Furthermore, as the polynomials are separable with respect to the blocks of variables $(\bfz_0, \bfz_1, \ldots, \bfz_n),$ $Q$ is block diagonal. Using these two observations, \eqref{polynomials} and \eqref{sdvdfgbg} imply~that:
\begin{align}
	p_t^0(\bfz_0) &=  \bfz_0^\top Q_0 \bfz_0 + \sum_{i=1}^m \sigma_i  h_i^0(\bfz_0) + \sum_{j=1}^{m'} \theta_j  v_j^0(\bfz_0), \label{eq:mult1}\\
	p_t^1(\bfz_\ell) &= \bfz_\ell^\top Q_\ell\bfz_\ell + \sum_{i=1}^m \sigma_i h_i^1(\bfz_\ell) + \sum_{j=1}^{m'} \theta_j v_j^1(\bfz_\ell), \quad  \ell\in [n]. \label{eq:mult2}
\end{align}
Lastly, assume there exists an SOS certificate for a rate $t\in (0,1)$ in the univariate case, i.e., a PSD matrix  $\tilde{Q}$ and scalars $ \{\tilde{\sigma}_i\}_i\subseteq \R_+, \{ \tilde{\theta}_j \}_j \subseteq \R$~where 
\begin{equation}
	p_t(\bfz)=\begin{pmatrix}1\\ \bfz_0\\ \bfz_1\end{pmatrix}^\top \tilde{Q}\begin{pmatrix} 1\\\bfz_0\\ \bfz_1\end{pmatrix}+\sum_{i=1}^m \tilde{\sigma}_i\left( h^0_i(\bfz_0)+ h^1_i(\bfz_1)\right)+\sum_{j=1}^{m'} \tilde{\theta}_j \left(v_j^0(\bfz_0) + v_j^1(\bfz_1) \right). \notag
\end{equation}
As before, this  may be decomposed into
\begin{align}
	p_t^0(\bfz_0) &= \bfz_0^\top \tilde{Q}_0 \bfz_0 + \sum_{i=1}^m \tilde{\sigma}_i h_i^0(\bfz_0) + \sum_{j=1}^{m'} \tilde{\theta}_j v_j^0(\bfz_0)\label{eq:uni1}\\
	p_t^1(\bfz_1) &= \bfz_1^\top \tilde{Q}_1 \bfz_1 + \sum_{i=1}^m \tilde{\sigma}_i h_i^1(\bfz_1) + \sum_{j=1}^{m'} \tilde{\theta}_j v_j^1(\bfz_1). \label{eq:uni2}
\end{align}
Comparing equation \eqref{eq:mult1} with \eqref{eq:uni1} and equation \eqref{eq:mult2} with \eqref{eq:uni2}, we see that
$Q_0 = \tilde{Q}_0$, $Q_\ell = \tilde{Q}_1,  \ \ell \in [n]$, $\sigma_i = \tilde{\sigma}_i, \ i \in [m]$, $\theta_j = \tilde{\theta}_j, \ j \in [m']$
is a valid certificate for the multivariate case. 
\end{proof}

Lastly, we note that  for higher-degree SOS certificates (beyond degree-1), it  is not immediately apparent how to 
verify that a certificate for the univariate case induces one for the multivariate case.

\subsection{Dual of the SOS Hierarchy}\label{sec:duality}
In this section we determine the exact relationship between the PEP and the SOS hierarchy introduced in this work. Specifically, we show that:
\begin{theorem}\label{sosvspep}If  the functional and algorithmic constraints are linearly Gram-representable  (i.e., \eqref{const1} holds),
the 1-step PEP applied to a contractive algorithm is equivalent to the first-level of the SOS hierarchy.
\end{theorem}

\begin{proof} For concreteness, we consider the 1-step PEP (i.e.,  where we only take~1 step using algorithm $\mathcal{A}$) with respect to the performance metric given by the objective function accuracy. Similar arguments apply when the performance is measured using the distance from optimality or the residual gradient~norm. 

The corresponding optimization problem is given by:
\begin{equation*}
\begin{aligned} 
	\underset{f,\bfx_0,\bfx_1,\bfx_*}{\text{maximize}} \ \ &  f(\bfx_1)-f(\bfx_*)\\
	\text{subject to} \ \ &f\in \F, \\
		& \bfx_1=\A\left( \bfx_0, f_0, \nabla f(\bfx_0) \right), \\
		& \bfg_* = 0, \ f(\bfx_0)-f(\bfx_*)\le R,\\
		& \bfx_0,\bfx_1, \bfx_*\in \R^n.
\end{aligned}
\end{equation*}
If the $\F$-interpolability and algorithmic conditions are of the form given in \eqref{const1} with $a_i = b_i = 0$, the equivalent \eqref{f-PEP} may be relaxed into an SDP of the following form:
\begin{equation}\label{expl:1-step-sdp}
\begin{aligned} 
\underset{\{\bfx_i,\bfg_i,f_i\}_{i\in I}}{\text{maximize}} \ \ &  f_1-f_*\\
\text{subject to} \ \ & \inn{c_i}{\bff} + \inn{C_i}{G} \geq 0, \ i=1,\dots,m, \\
					& \inn{d_j}{\bff} + \inn{D_j}{G} = 0, \ j=1,\dots,m', \\
					& f_0 - f_* \leq R,
\end{aligned}
\end{equation}
where we recall that $\bff = (f_0,f_1,f_*)$, $X = (\bfx_0 \ \bfx_1 \ \bfx_* \ \bfg_0 \ \bfg_1 \ \bfg_*)\in \R^{n\times 6}$, $G = X^\top X$ and $I = \{0,1,*\}$.
Note that $f_1-f_*$ may be expressed as $\inn{(0,1,-1)}{\bff}$ and $f_0-f_*$ as $\inn{(1,0,-1)}{\bff}$. Setting $\sigma_i$, $\theta_j$ and $t$ to be the Lagrange multipliers of the three sets of constraints in \eqref{expl:1-step-sdp} respectively, the dual of \eqref{expl:1-step-sdp} is
\begin{equation}\label{expl:pep-dual}
\begin{aligned}
\underset{t,\{\sigma_i\}_{i=1}^m, \{\theta_j\}_{j=1}^{m'}}{\text{minimize}} \ \ & tR \\
\text{subject to} \ \ & \begin{pmatrix}0 \\ 1 \\ -1\end{pmatrix}-t\begin{pmatrix}1 \\ 0 \\-1\end{pmatrix} + \sum_{i=1}^m \sigma_i c_i + \sum_{j=1}^{m'} \theta_j d_j = 0\\
					  & \sum_{i=1}^m \sigma_i C_i + \sum_{j=1}^{m'} \theta_j D_j \preceq 0 \\
					  & t \geq 0, \ \sigma_i \geq 0, \ \theta_j \in \R.
\end{aligned}
\end{equation}
On the other hand, using the SOS approach and restricting our attention to degree-1 certificates, the SOS-SDP defined in  \eqref{eq:sdp} is given by:
\begin{equation*}
\begin{aligned}
	\underset{t,Q,\{\sigma_i\}_{i=1}^m, \{\theta_j\}_{j=1}^{m'}}{\text{minimize}} \ \ & t \\
	\text{subject to} \ \ \ \ & t(f_k - f_*) - (f_{k+1} - f_*) = \begin{pmatrix}1 \\ \bfz \end{pmatrix}^\top Q \begin{pmatrix}1 \\ \bfz \end{pmatrix} + \sum_{i=1}^m \sigma_i h_i(\bfz) + \sum_{j=1}^{m'} \theta_j v_j(\bfz), \\
						& t\in (0,1),\\
						& Q \succeq 0, \ \sigma_i \geq 0, \ \theta \in \R,
\end{aligned}
\end{equation*}
where  we define  as before  (recall \eqref{eq:zvar2})
$\bfz=(\bfz_0, \bfz_1, \ldots, \bfz_n),$ with  	$\bfz_0=  ( f_k, f_{k+1}, f_*)$ and 
\begin{equation*}
	\bfz_\ell = (x_*(\ell),x_k(\ell), x_{k+1}(\ell), g_*(\ell), g_k(\ell), g_{k+1}(\ell)), \ \ell\in [n],
\end{equation*}
 and furthermore, 
  the polynomial constraints $h_i(\bfz) \geq 0$ and $v_j(\bfz) = 0$ have the form given in  \eqref{const3} with $a_i = b_i = 0$, i.e., 
\begin{equation}\label{sdvrgbgr}
	h_i(\bfz)=\inn{c_i}{\bfz_0} + \sum_{\ell=1}^n \bfz_\ell^\top C_i \bfz_\ell   \text{ and } v_i(\bfz)= \inn{d_j}{\bfz_0} + \sum_\ell \bfz_\ell^\top D_j \bfz_\ell.
\end{equation}
Note that polynomials of the form  \eqref{sdvrgbgr} satisfy the requirement identified in~\eqref{polynomials}. In particular, this implies that the matrix $Q$ is block-diagonal. 
Based on this, 
the constraint 
\begin{equation*}t(f_k - f_*) - (f_{k+1} - f_*) = \begin{pmatrix}1 \\ \bfz \end{pmatrix}^\top Q \begin{pmatrix}1 \\ \bfz \end{pmatrix} + \sum_{i=1}^m \sigma_i h_i(\bfz) + \sum_{j=1}^{m'} \theta_j v_j(\bfz),\end{equation*}
is equivalent to the equality of the following polynomials in the variables $\bfz=(\bfz_0, \bfz_1, \ldots, \bfz_n)$:
\begin{equation*}
\begin{aligned} 
&0=  \bfz_0^\top \left( t\begin{pmatrix}1 \\ 0 \\-1\end{pmatrix}-\begin{pmatrix}0 \\ 1 \\-1\end{pmatrix}-  \sum_{i=1}^m\sigma_i{c_i} - \sum_{j=1}^{m'} \theta_j {d_j}\right),\\
	& 0  =  \bfz_\ell^\top \left( Q_\ell  + \sum_{i=1}^m \sigma_i  {C_i}   + \sum_{j=1}^{m'} \theta_j {D_j}\right)\bfz_\ell, \ \forall \ell,
\end{aligned}
\end{equation*}
which is in turn equivalent to:
\begin{equation*}\begin{aligned} 
&0=   t\begin{pmatrix}1 \\ 0 \\-1\end{pmatrix}-\begin{pmatrix}0 \\ 1 \\-1\end{pmatrix}-  \sum_{i=1}^m\sigma_i{c_i} - \sum_{j=1}^{m'} \theta_j {d_j},\\
	& 0  =  Q_{\ell}  + \sum_{i=1}^m \sigma_i  {C_i}   + \sum_{j=1}^{m'} \theta_j {D_j}, \ \forall \ell.
		\end{aligned}
		\end{equation*}
		  Thus, the SOS-SDP may be expressed as:
\begin{equation}\label{expl:sos-final}
\begin{aligned}
	\underset{t,\{\sigma_i\}_{i=1}^m, \{\theta_j\}_{j=1}^{m'}}{\text{minimize}} \ \ & t \\
	\text{subject to} \ \  \ \ & \begin{pmatrix}0 \\ 1 \\ -1\end{pmatrix}-t\begin{pmatrix}1 \\ 0 \\-1\end{pmatrix} + \sum_{i=1}^m \sigma_i {c_i} + \sum_{j=1}^{m'} \theta_j {d_j} = 0, \\
		& \sum_{i=1}^m \sigma_i C_i + \sum_{j=1}^{m'} \theta_j D_j \preceq 0,\\
		& t\in (0,1), \ \sigma_i \geq 0, \ \theta_j \in \R.\\
\end{aligned}
\end{equation}
Finally, we note that the constraint $t < 1$ can be dropped if the algorithm is a descent algorithm. The SDP induced by the 1-PEP \eqref{expl:pep-dual} and the SDP induced by the degree-1 SOS problem \eqref{expl:sos-final} are hence equivalent problems. 
\end{proof}

\section{Using the SOS Hierarchy to Obtain New Convergence Bounds}\label{sec:inexact}
In this section, we consider a few variants of GD with inexact line search under both the noisy and noiseless settings.
In ``noisy'' GD, the update step is given by $\bfx_{k+1} = \bfx_k + \gamma_k \bfd_k$, where the error (i.e., the difference between the descent direction  $\bfd_k$ and negative gradient) is bounded relative to the gradient:
\begin{equation}\label{eq:noise}
	\norm{\bfd_k - (-\bfg_k)} \leq \delta \norm{\bfg_k},
\end{equation}
for some noise level $\delta \in [0,1)$. This assumption ensures that the next step taken remains in a descent direction, i.e., $-\bfg_k^\top \bfd_k > 0$, e.g. see  \cite[Page 38]{nonlinear_dimitri}. 


We begin  by deriving  some inequalities that will be used throughout this section. 
We note that
\begin{align}
\bfd_k^\top \bfg_k &= (\bfd_k + \bfg_k)^\top \bfg_k - \norm{\bfg_k}^2 \notag \\
&\leq \norm{\bfd_k+\bfg_k}\norm{\bfg_k} - \norm{\bfg_k}^2 && \text{by Cauchy-Scwartz} \notag \\
&\leq (\delta-1)\norm{\bfg_k}^2 && \label{eq:conseq1}
\end{align}
where the last inequality follows by \eqref{eq:noise}. By a similar argument, we have
\begin{equation}\label{eq:conseq2}
-\bfd_k^\top \bfg_k = (-\bfd_k - \bfg_k)^\top \bfg_k + \norm{\bfg_k}^2 \leq (\delta + 1)\norm{\bfg_k}^2.
\end{equation}
Squaring and expanding \eqref{eq:noise}, we have
\begin{equation}\label{eq:noise_conseq}
\norm{\bfd_k}^2 \leq -2\bfd_k^\top \bfg_k - (1 - \delta^2)\norm{\bfg_k}^2,
\end{equation}
which, combined  with \eqref{eq:conseq2}, implies that
\begin{equation}\label{eq:conseq3}
\norm{\bfd_k}^2 \leq \left[2(\delta+1) - (1-\delta^2)\right] \norm{\bfg_k}^2 = (\delta+1)^2 \norm{\bfg_k}^2.
\end{equation}
Furthermore, by the triangle inequality, we have $\norm{-\bfg_k} 		\leq \norm{\bfd_k} + \norm{-\bfd_k - \bfg_k}$ and thus
\be\label{eq:conseq4}
 \norm{\bfd_k} \geq \norm{\bfg_k} - \norm{-\bfd_k - \bfg_k} \geq \norm{\bfg_k} - \delta \norm{\bfg_k} = (1-\delta) \norm{\bfg_k}.%
\ee



\subsection{The Armijo Rule}
Using Armijo-terminated line search,  the step size $\gamma_k$ is chosen so that
\begin{align}
f(\bfx_k + \gamma_k \bfd_k) &\leq f(\bfx_k) + \epsilon \gamma_k \bfd_k^\top \bfg_k \label{eq:Ar1} \\
f(\bfx_k + \eta \gamma_k \bfd_k) &\geq f(\bfx_k) + \epsilon \eta \gamma_k \bfd_k^\top \bfg_k,\label{eq:Ar2}
\end{align}
for some $\epsilon \in (0,1)$ and $\eta > 1$, e.g., see  \cite[Section 2.4.1]{nemirovski} and  \cite[Page 29]{nonlinear_dimitri}. 
In the noisy setting, the gradient  is not available. Substituting $-\bfd_k$ for $\bfg_k$ in \eqref{eq:Ar1}-\eqref{eq:Ar2} we obtain:
\begin{align}
	f(\bfx_k + \gamma_k \bfd_k) &\leq f(\bfx_k) - \epsilon \gamma_k \norm{\bfd_k}^2, \label{eq:GDArN1}\\
	f(\bfx_k + \eta \gamma_k \bfd_k) &\geq f(\bfx_k) - \epsilon \eta \gamma_k \norm{\bfd_k}^2. \label{eq:GDArN2}
\end{align}
When noisy GD with Armijo-terminated line search is applied to an $L$-smooth function, we are able to show the validity of the following inequality:
\begin{equation}\label{eq:armijo_constraint}
	f_k - f_{k+1} - \frac{2\epsilon (1-\delta)^2}{\eta L}\left(\frac{1-\delta}{(1+\delta)^2} - \epsilon \right)\norm{\bfg_k}^2 \geq 0,
\end{equation}
for $\delta \in [0,1)$, $\epsilon \in \left(0,\frac{1-\delta}{(1+\delta)^2}\right)$ and $\eta > 1$. Indeed, 
as $f$ is $L$-smooth we have that
\begin{equation*}
f(\bfy) \leq f(\bfx) + (\bfy-\bfx)^\top \nabla f(\bfx) + \frac{L}{2}\norm{\bfy - \bfx}^2.
\end{equation*}
Substituting  $\bfx = \bfx_k$ and $\bfy = \bfx_k + \eta \gamma_k \bfd_k$ we get
\begin{equation}\label{eq:l-lipschitz}
f(\bfx_k + \eta \gamma_k \bfd_k) - f(\bfx_k) \leq \eta \gamma_k \bfd_k^\top \bfg_k + \frac{L}{2}\eta^2 \gamma_k^2 \norm{\bfd_k}^2.
\end{equation}
Furthermore, combining \eqref{eq:GDArN2} and \eqref{eq:l-lipschitz}, we have
\begin{align}
0 &\leq \bfd_k^\top \bfg_k + (\epsilon + \frac{L}{2}\eta \gamma_k)\norm{\bfd_k}^2 \notag \\
&\leq (\delta-1)\norm{\bfg_k}^2 + (\epsilon + \frac{L}{2}\eta \gamma_k)(\delta+1)^2\norm{\bfg_k}^2 && \text{by \eqref{eq:conseq1} and \eqref{eq:conseq3}}.\notag 
\end{align}
In turn, this implies 
\be
 \gamma_k \geq \frac{2}{\eta L}\left(\frac{1-\delta}{(1+\delta)^2} - \epsilon \right), \label{eq:GDArN_gamma}
\ee
and as we require $\gamma_k > 0$, we need that $\epsilon < \frac{1-\delta}{(1+\delta)^2}$. Substituting \eqref{eq:GDArN_gamma} into \eqref{eq:GDArN1}, we~have
\begin{align*}
0 &\leq f_k - f_{k+1} - \frac{2\epsilon}{\eta L}\left(\frac{1-\delta}{(1+\delta)^2} - \epsilon \right)\norm{\bfd_k}^2 \\
&\leq f_k - f_{k+1} - \frac{2\epsilon}{\eta L}\left(\frac{1-\delta}{(1+\delta)^2} - \epsilon \right) (1-\delta)^2\norm{\bfg_k}^2, &&
\end{align*}
where the last inequality follows by \eqref{eq:conseq4}. 
We  note  that for $\delta = 0$,  the above inequality reduces to \cite[Equation (3.3.9)]{nemirovski}.

 For the next theorem we use polynomial constraints $h_1(\bfz) \geq 0, \dots, {h_6(\bfz)\geq 0}$ given in~\eqref{eq:interpolability}, and  $h_7(\bfz) \geq 0$ given in \eqref{eq:armijo_constraint}, and search for a degree-1 SOS certificate as described  in \eqref{eq:putinardeg1}. Constructing and solving the appropriate SDP, we obtain the following~result.
\begin{theorem}\label{thm:armijo_result}
	For any $\delta \in [0,1)$, $\epsilon \in \left(0, \frac{1-\delta}{(1+\delta)^2}\right)$ and $\eta > 1$, given an $(\mu,L)$-smooth function $f:\R^n\to \R$ and any sequence of iterates $\{\bfx_k\}_{k\geq 1}$ generated using noisy GD with Armijo-terminated line search, the~bound
	\begin{equation*}
	f_{k+1} - f_* \leq \left[1 - \frac{4\mu\epsilon(1-\delta)^2}{\eta L}\left(\frac{1-\delta}{(1+\delta)^2} - \epsilon \right)\right] (f_k - f_*)
	\end{equation*}
	admits an SOS certificate of degree-1.
\end{theorem}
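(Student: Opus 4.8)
The plan is to exhibit an explicit degree-1 SOS certificate of the form \eqref{eq:putinardeg1} for the polynomial $p_t(\bfz) = t(f_k - f_*) - (f_{k+1} - f_*)$ with $t = 1 - \frac{4\mu\epsilon(1-\delta)^2}{\eta L}\left(\frac{1-\delta}{(1+\delta)^2} - \epsilon\right)$, using the six $\FmuL$-interpolability inequalities $h_1(\bfz) \ge 0, \ldots, h_6(\bfz) \ge 0$ from \eqref{eq:interpolability} together with the extra algorithmic inequality $h_7(\bfz) \ge 0$ given in \eqref{eq:armijo_constraint}. Since only a degree-1 certificate is sought, by the reduction in Section~\ref{sec:choices} it suffices to work in the univariate case $n=1$: I would set up the identity $p_t(\bfz) = \begin{pmatrix}1 \\ \bfz\end{pmatrix}^\top Q \begin{pmatrix}1 \\ \bfz\end{pmatrix} + \sum_{i=1}^{7}\sigma_i h_i(\bfz)$ (there are no equality constraints here, so no $\theta_j$ terms), match the coefficients of all monomials, and thereby obtain a feasibility SDP in the scalars $t$, $\{\sigma_i\}_{i=1}^7 \subseteq \R_+$, and the PSD matrix $Q$, exactly as was done for GD with exact line search in Section~\ref{sec:eg1}.

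Next, following step (3) of the general strategy, I would solve this SDP numerically for several choices of the parameters $(\mu, L, \epsilon, \eta, \delta)$ to guess the analytic form of the optimal multipliers. Based on the certificate already announced in Section~\ref{sec:duality}'s summary for the noiseless Armijo case---namely
\[
\left(1 - \frac{4\mu\epsilon(1-\epsilon)}{\eta L}\right)(f_k - f_*) - (f_{k+1} - f_*) \ge \frac{2\epsilon(1-\epsilon)}{\eta(L-\mu)}\norm{\bfg_k + \mu(\bfx_* - \bfx_k)}^2
\]
---I expect the noisy version to have a similar shape, with the factor $(1-\epsilon)$ replaced throughout by $\frac{1-\delta}{(1+\delta)^2} - \epsilon$ (or a closely related expression) and the right-hand-side again a single perfect square of a linear combination of $\bfg_k$ and $\bfx_k - \bfx_*$. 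Concretely, I anticipate that only a small number of the $\sigma_i$ are nonzero (likely $\sigma_7$ together with one or two of the interpolability multipliers relating $\bfx_k$ to $\bfx_*$), and that the matrix $Q$ is supported on a small block; I would record $t^*$, the $\sigma_i^*$, and $Q^*$ in closed form.

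Finally, to complete the proof I would verify that this guessed solution is genuinely feasible: check that the matched-coefficient equations hold as polynomial identities (via symbolic algebra, as the paper does with Mathematica), that $\sigma_i^* \ge 0$ for all $i$ in the stated parameter range $\delta \in [0,1)$, $\epsilon \in \bigl(0, \frac{1-\delta}{(1+\delta)^2}\bigr)$, $\eta > 1$, and that $Q^*$ is PSD (which, if $Q^*$ is a single rank-one block $\mu c\, \bfu\bfu^\top$ with $c \ge 0$, is immediate). Once this identity is established, the argument concludes exactly as in the previous theorem: along any trajectory of noisy GD with Armijo-terminated line search on an $(\mu,L)$-smooth $f$, all seven constraints $h_i(\bfz) \ge 0$ hold, so the right-hand side of the certificate is a nonnegative combination of nonnegative quantities, hence $p_{t^*}(\bfz) \ge 0$, which is the claimed bound. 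The main obstacle is step (2)--(3): correctly identifying the analytic form of the multipliers and the entries of $Q^*$ from numerical data and confirming PSD-ness of $Q^*$ symbolically over the full parameter range---the coefficient-matching and the final nonnegativity argument are then routine. A minor subtlety worth flagging is that the constraint $h_7 \ge 0$ is only meaningful (and the announced rate only lies in $(0,1)$) precisely when $\epsilon < \frac{1-\delta}{(1+\delta)^2}$, so the hypotheses of the theorem are exactly what is needed for feasibility.
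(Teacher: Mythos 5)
Your plan follows exactly the route the paper takes: the same seven polynomial constraints ($h_1,\dots,h_6$ from \eqref{eq:interpolability} plus the Armijo-derived inequality \eqref{eq:armijo_constraint} as $h_7$), the degree-1 ansatz \eqref{eq:putinardeg1}, the reduction to $n=1$, and a certificate in which almost all multipliers vanish and the SOS part is a single square in $\bfg_k$ and $\bfx_k-\bfx_*$. Your structural guess is in fact dead on: the paper's certificate uses only $\sigma_5$ and $\sigma_7$, with a rank-one $Q$.

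The gap is that the proposal stops exactly where the proof begins: you never exhibit the certificate, and you yourself flag the identification of the multipliers and of $Q^*$ as ``the main obstacle.'' The existence of a degree-1 certificate is precisely the assertion of the theorem, so a completed argument must display the multipliers and verify the polynomial identity; expecting that a numerical solve followed by symbolic verification will succeed is a research plan, not a proof. For the record, the paper's choice is $\sigma_7=1$,
\begin{equation*}
\sigma_5 \;=\; \frac{4\mu\epsilon(1-\delta)^2}{\eta L}\left(\frac{1-\delta}{(1+\delta)^2}-\epsilon\right),
\qquad
t \;=\; 1-\frac{4\mu\epsilon(1-\delta)^2}{\eta L}\left(\frac{1-\delta}{(1+\delta)^2}-\epsilon\right),
\end{equation*}
and the SOS term is
\begin{equation*}
\frac{2\epsilon(1-\delta)^2\left(1-\delta-\epsilon(1+\delta)^2\right)}{\eta(L-\mu)(1+\delta)^2}\,\norm{\bfg_k+\mu(\bfx_*-\bfx_k)}^2,
\end{equation*}
so that $t(f_k-f_*)-(f_{k+1}-f_*)$ equals this square plus $\sigma_5 h_5(\bfz)+\sigma_7 h_7(\bfz)$ identically in $\bfz$; note the coefficient of the square is not obtained by the literal substitution $(1-\epsilon)\mapsto \tfrac{1-\delta}{(1+\delta)^2}-\epsilon$ you anticipated, but carries the extra factor $(1-\delta)^2/(1+\delta)^2$. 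Once this identity is checked (by expanding and matching coefficients, or symbolically), your concluding step---nonnegativity of $h_5$, $h_7$, the multipliers, and the square along any trajectory of noisy Armijo GD on a function in $\FmuLRn$, with positivity of the square's coefficient guaranteed by $\epsilon<\tfrac{1-\delta}{(1+\delta)^2}$---is exactly the paper's and is correct.
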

\begin{proof}
	Defining 
	$$\sigma_5 = \frac{4\mu\epsilon (1-\delta)^2}{\eta L}\left( \frac{1-\delta}{(1+\delta)^2} -\epsilon \right), \quad \sigma_7= 1, \quad t = 1 - \frac{4\mu\epsilon(1-\delta)^2}{\eta L}\left(\frac{1-\delta}{(1+\delta)^2} - \epsilon \right),$$  we have that $t(f_k - f_*) - (f_{k+1} - f_*)$ is equal to 
	\begin{equation}\label{eq:main3}
	  \frac{2\epsilon(1-\delta)^2\left( 1-\delta-\epsilon(1+\delta)^2 \right)}{\eta (L-\mu) (1+\delta)^2}\norm{\bfg_k + \mu(\bfx_* - \bfx_k)}^2 + \sigma_5 h_5(\bfz) + \sigma_7 h_7(\bfz).
	\end{equation}
	The first term in the right-hand-side of equation \eqref{eq:main3} is strictly positive since $\epsilon < \frac{1-\delta}{(1+\delta)^2}$. In addition, as previously discussed, any sequence of iterates $\{\bfx_k\}_{k\geq 1}$ generated by noisy GD with Armijo-terminated line search for minimizing a function $f \in \FmuLRn$ satisfies $h_5(\bfz) \geq 0$ and $h_7(\bfz) \geq 0$. Since $\sigma_5, \sigma_7\ge 0$, overall the right-hand-side of \eqref{eq:main3} is positive. Hence, the left-hand-side of equation \eqref{eq:main3} is also positive, concluding the proof.
\end{proof}
From Theorem \ref{thm:armijo_result}, we also get  a rate  bound for GD with Armijo-terminated line search in the noiseless case (i.e.,  $\delta = 0$), which is given by
\begin{equation}\label{eq:armijo_result}
f_{k+1} - f_* \leq \left(1 - \frac{4\mu\epsilon(1-\epsilon)}{\eta L}\right) (f_k - f_*).
\end{equation}
As a consequence of \eqref{eq:armijo_result},  for all $N \geq 1$ we have
\begin{equation}\notag
f_N - f_* \leq \left(1 - \frac{4\epsilon(1-\epsilon)}{\eta \kappa}\right)^N (f_0 - f_*).
\end{equation}
where $\kappa = L/\mu$ is the condition number of $f$.
To the best of our knowledge, the best bounds for a function $f \in \FmuLRn$ minimized by GD with Armijo rule were given by Luenberger and Ye \cite[Page~239]{ye} and Nemirovski \cite[Proposition~3.3.5]{nemirovski}. For any $\epsilon < 0.5$ and $\eta > 1$, Luenberger and Ye (LY) showed~that
\begin{equation*}
f_N - f_* \leq \left( 1 - \frac{2\epsilon}{\eta \kappa} \right)^N (f_0 - f_*),
\end{equation*}
while for any $\epsilon \geq 0.5$ and $\eta \geq 1$, Nemirovski showed that
\begin{equation}\notag
f_N - f_* \leq \kappa\left( \frac{\kappa - (2-\epsilon^{-1})(1-\epsilon)\eta^{-1}}{\kappa + (\epsilon^{-1} -1)\eta^{-1}} \right)^N (f_0 - f_*).
\end{equation}
To compare these convergence rates, we consider the three contraction factors
\begin{equation*}
t_{\new} = 1 - \frac{4\epsilon(1-\epsilon)}{\eta \kappa}, \qquad t_{\LY} = 1 - \frac{2\epsilon}{\eta \kappa}, \qquad t_{\nemi} = \frac{\kappa - (2-\epsilon^{-1})(1-\epsilon)\eta^{-1}}{\kappa + (\epsilon^{-1} -1)\eta^{-1}}.
\end{equation*}
Since Luenberger and Ye's bound only holds for $\epsilon \in (0,0.5)$ whereas ours hold for $\epsilon \in (0,1)$, we compare $t_{\new}$ and $t_{\LY}$ within the common range $0 < \epsilon < 0.5$. On the other hand, Nemirovski's bound only holds for $\epsilon \in [0.5, 1)$, hence we compare $t_{\new}$ and $t_{\nemi}$ within this range.

Using  simple arguments  we now show  that $t_{\new} < t_{\LY}$ and $t_{\new} \leq t_{\nemi}$ within each range of comparison.
Thus, our contraction factor is no larger than those of Luenberger and Ye's, and Nemirovski's. Indeed, to show $ t_{\new} < t_{\LY}$ note that as $\epsilon < 0.5$, we have
\be
1 < 2(1-\epsilon) \iff 
\frac{2\epsilon}{\eta\kappa} < \frac{4\epsilon(1-\epsilon)}{\eta\kappa} \iff 
1 - \frac{ 4 \epsilon (1-\epsilon) }{ \eta \kappa} < 1 - \frac{ 2\epsilon }{ \eta\kappa}.
\ee
Next, we show that $t_{\nemi}  \geq t_{\new}.$ Since $0.5 \leq \epsilon$, we have
\begin{equation*}
1-2\epsilon \geq 1-4\epsilon^2.
\end{equation*}
Since $\eta > 1$, $\kappa > 1$ and $1 - \epsilon > 0$, this implies
\begin{equation*}
\eta\kappa(1-2\epsilon)(1-\epsilon) \geq \eta\kappa(1-4\epsilon^2)(1-\epsilon).
\end{equation*}
Since $4\epsilon(1-\epsilon)^2 > 0$, we can subtract it from the right-hand side:
\begin{equation*}
\eta\kappa(1-2\epsilon)(1-\epsilon) \geq \eta\kappa(1-4\epsilon^2)(1-\epsilon) - 4\epsilon(1-\epsilon)^2,
\end{equation*}
and add $\epsilon(\eta\kappa)^2$ to each side and factorize:
\begin{align*}
\epsilon(\eta\kappa)^2 + \eta\kappa(1-2\epsilon)(1-\epsilon) &\geq \epsilon(\eta\kappa)^2 + \eta\kappa(1-4\epsilon^2)(1-\epsilon) - 4\epsilon(1-\epsilon)^2, \\
\eta\kappa\left[ \epsilon\eta\kappa + (1-2\epsilon)(1-\epsilon) \right] &\geq \left[\epsilon\eta\kappa + (1-\epsilon)\right]\left[ \eta\kappa - 4\epsilon(1-\epsilon) \right].
\end{align*}
Rearranging the equation, we obtain
\begin{align*}
\frac{\epsilon\eta\kappa + (1-2\epsilon)(1-\epsilon)}{\epsilon\eta\kappa + (1-\epsilon)} &\geq \frac{\eta\kappa - 4\epsilon(1-\epsilon)}{\eta\kappa},
\end{align*}
and the proof that  $t_{\nemi}  \geq t_{\new} $  is concluded.

Figures \ref{fig:gdarmijo_compare_luen} and \ref{fig:gdarmijo_compare_nemi} compare $t_{\new}$ with $t_{\LY}$ and $t_{\nemi}$ respectively for various values of $\kappa$ and $\eta$.
\begin{figure}[t]
	\centering
	\begin{minipage}{0.45\textwidth}
		\centering
		\includegraphics[width=\textwidth]{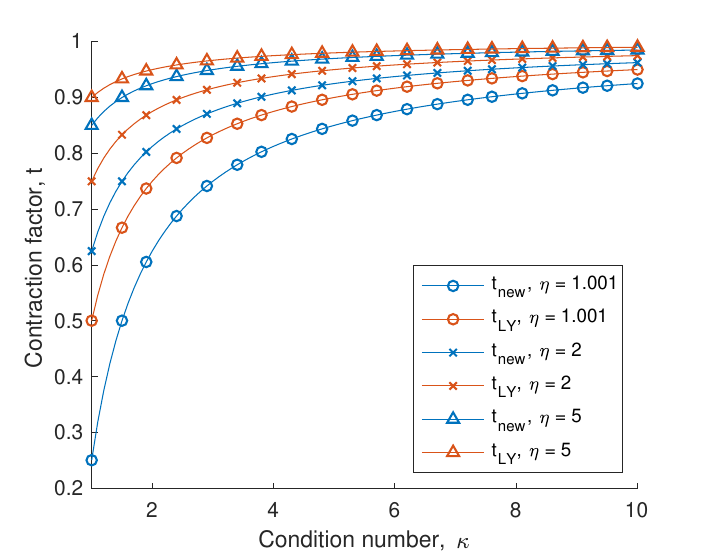}
		\caption{$t_{\new}$ and $t_{\LY}$ for $\epsilon = 0.25$.}
		\label{fig:gdarmijo_compare_luen}
	\end{minipage}
	\begin{minipage}{0.45\textwidth}
		\centering
		\includegraphics[width=\textwidth]{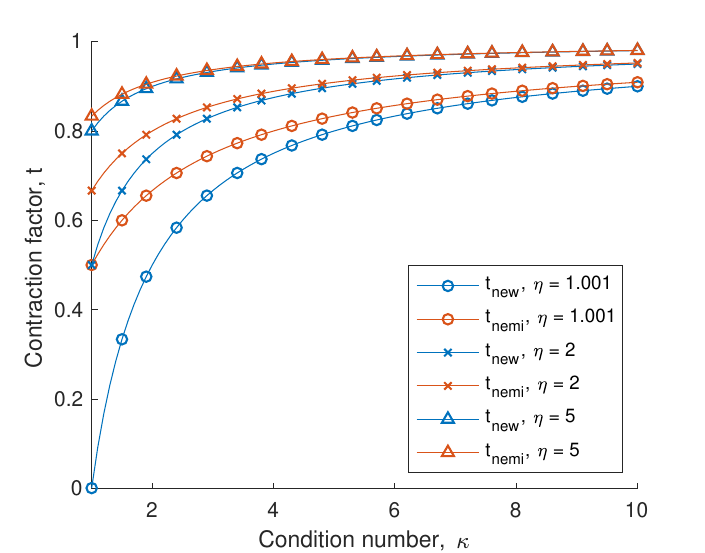}
		\caption{$t_{\new}$ and $t_{\nemi}$ for $\epsilon = 0.5$.}
		\label{fig:gdarmijo_compare_nemi}
	\end{minipage}
\end{figure}
We note that when $\eta \to 1^+$ and $\kappa \to 1^+$ and $\epsilon = 0.5$, our contraction factor $t_{\new}$ tends to 0, whereas Nemirovski's contraction factor $t_{\nemi}$ tends to 0.5. In fact, when $\kappa \to 1^+$, the function $f$ behaves roughly as a quadratic. Combining this with the fact that $\eta \to 1^+$ and the updates as shown in \eqref{eq:Ar1}, it can be verified that GD with Armijo rule takes only a single step to attain the optimal solution. Hence, the contraction factor we derived, i.e., $t_{\new} = 0$, is tight in this limiting scenario.

Conversely, Nemirovski's contraction factor $t_{\nemi} = 0.5$ is loose. The looseness of Nemirovski's analysis can be attributed to the fact that he only applies the condition
\begin{equation*}
f(\bfx) + (\bfy - \bfx)^\top\nabla f(\bfx) + \frac{\mu}{2}\norm{\bfy-\bfx}^2 \leq f(\bfy) \leq f(\bfx) + (\bfy - \bfx)^\top\nabla f(\bfx) + \frac{L}{2}\norm{\bfy-\bfx}^2
\end{equation*}
to select iterates (i.e., discretizing the condition), which is not sufficient to guarantee $\FmuL$-interpolability. On the other hand, we make use of the condition from  Theorem \ref{thm:interpolability}, which constitutes a {necessary}  and sufficient condition for a function to be $\FmuL$-interpolable.

\subsection{The Goldstein Rule}
Using Goldstein-terminated line search, the step size is chosen so that
\begin{equation}\label{eq:Gs}
(1-\epsilon)\gamma_k \bfd_k^\top \bfg_k \leq f(\bfx_k + \gamma_k \bfd_k) - f(\bfx_k) \leq \epsilon\gamma_k \bfd_k^\top \bfg_k.
\end{equation}
for some $\epsilon \in (0, 1/2)$ \cite[Section 2.4.2]{nemirovski}. The Goldstein rule was proposed earlier than the Armijo rule, and encapsulates the same principle of sufficient decrease as the Armijo rule \cite[Page 32]{nonlinear_dimitri}.

To examine the performance of the Goldstein-terminated line search in  the noisy setting, we again substitute $-\bfd_k$ for $\bfg_k$ in \eqref{eq:Gs} to get:
\begin{equation}\label{eq:GDGsN}
	-(1-\epsilon)\gamma_k \norm{\bfd_k}^2 \leq f(\bfx_k + \gamma_k \bfd_k) - f(\bfx_k) \leq -\epsilon\gamma_k\norm{\bfd_k}^2.
\end{equation}
When noisy GD with the Goldstein rule is applied to an $L$-smooth function, the following polynomial constraint:
\begin{equation}\label{eq:goldstein_constraint}
	f_k - f_{k+1} - \frac{2\epsilon(1-\delta)^2}{L}\left(\frac{1-\delta}{(1+\delta)^2} - (1-\epsilon)\right)\norm{\bfg_k}^2 \geq 0
\end{equation}
holds for $\delta \in [0,\sqrt{5}-2)$ and $\epsilon \in \left(1-\frac{1-\delta}{(1+\delta)^2},\frac{1}{2} \right)$. 
Indeed, as $f$ is $L$-smooth we~get 
\begin{equation*}
	f(\bfx_k + \gamma_k \bfd_k) - f(\bfx_k) \leq \gamma_k \bfd_k^\top \bfg_k + \frac{L}{2}\gamma_k^2 \norm{\bfd_k}^2,
\end{equation*}
which combined with the first inequality in \eqref{eq:GDGsN} gives 
\begin{align}
	0 &\leq \bfd_k^\top \bfg_k + \left(\frac{L}{2}\gamma_k + (1-\epsilon) \right)\norm{\bfd_k}^2 \notag \\
	  &\leq (\delta-1)\norm{\bfg_k} + \left(\frac{L}{2}\gamma_k + 1-\epsilon \right)(1+\delta)^2 \norm{\bfg_k}^2 && \text{by \eqref{eq:conseq1} and \eqref{eq:conseq3}}\notag.
\end{align}
In turn, this implies that 
	\be \label{eq:GDGsN_gamma}
	 \gamma_k \geq \frac{2}{L}\left( \frac{1-\delta}{(1+\delta)^2} - (1-\epsilon)\right).
\ee
Since we require $\gamma_k > 0$, we need that $\epsilon > 1 - \frac{1-\delta}{(1+\delta)^2}$. Together with the condition that $\epsilon \in (0, 1/2)$, this implies that the Goldstein rule only works in the case where $\delta < \sqrt{5}-2$ and $\epsilon \in \left(1-\frac{1-\delta}{(1+\delta)^2}, \frac{1}{2}\right)$. Substituting \eqref{eq:GDGsN_gamma} into the second inequality in \eqref{eq:GDGsN}, we have
\begin{align*}
	0 &\leq f_k - f_{k+1} - \frac{2\epsilon}{L}\left(\frac{1-\delta}{(1+\delta)^2} - (1-\epsilon)\right)\norm{\bfd_k}^2 \\
	  &\leq f_k - f_{k+1} - \frac{2\epsilon(1-\delta)^2}{L}\left(\frac{1-\delta}{(1+\delta)^2} - (1-\epsilon)\right)\norm{\bfg_k}^2 && \text{by \eqref{eq:conseq4}}
\end{align*}

 
 The polynomial constraints we use in the search of a degree-1 SOS certificate \eqref{eq:putinardeg1} are as follows: $h_1(\bfz) \geq0, \dots, h_6(\bfz)\geq 0$ given in \eqref{eq:interpolability}, as well as \eqref{eq:goldstein_constraint}, denoted by $h_7(\bfz) \geq 0$. As before, after constructing and solving the appropriate SDP we derive the following~result:
\begin{theorem}\label{thm:goldstein_result}
	For any $\delta \in [0,\sqrt{5}-2)$ and $\epsilon \in \left(1-\frac{1-\delta}{(1+\delta)^2},\frac{1}{2} \right)$, given an $(\mu,L)$-smooth function $f:\R^n\to \R$ and any sequence of iterates $\{\bfx_k\}_{k\geq 1}$ generated using noisy GD with the Goldstein rule, the~bound
	\begin{equation*}
	f_{k+1} - f_* \leq \left(1 - \frac{4\mu\epsilon(1-\delta)^2}{L}\left[\frac{1-\delta}{(1+\delta)^2} - (1-\epsilon)\right]\right) (f_k - f_*) 
	\end{equation*}
	admits an SOS certificate of degree-1.
\end{theorem}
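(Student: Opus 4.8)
The plan is to replicate, in the Goldstein setting, the argument used for Theorem~\ref{thm:armijo_result}. First I would reduce to the univariate case $n=1$ — legitimate because, as explained in Section~\ref{sec:choices}, the performance polynomial $t(f_k-f_*)-(f_{k+1}-f_*)$, the interpolability polynomials $h_1,\dots,h_6$ of \eqref{eq:interpolability}, and the Goldstein polynomial $h_7$ of \eqref{eq:goldstein_constraint} are all separable across the blocks $\bfz_0,\bfz_1,\dots,\bfz_n$, invariant under permuting $\bfz_1,\dots,\bfz_n$, and constant-free. Then I would exhibit an \emph{explicit} degree-1 Putinar certificate of the form \eqref{eq:putinardeg1}: set all multipliers to zero except $\sigma_7=1$, attached to $h_7$, and
\[
\sigma_5 \;=\; \frac{4\mu\epsilon(1-\delta)^2}{L}\left[\frac{1-\delta}{(1+\delta)^2}-(1-\epsilon)\right],
\]
attached to the fifth interpolability inequality $h_5$ (the one comparing $\bfx_*$ with $\bfx_k$, taken with $\bfg_*=0$ since $\bfx_*$ minimizes $f$), and finally take $t=1-\sigma_5$, which is precisely the contraction factor in the statement. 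There are no equality constraints to carry along, since the Goldstein rule contributes only the single inequality \eqref{eq:goldstein_constraint}.

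The heart of the argument is then the polynomial identity
\[
t(f_k-f_*)-(f_{k+1}-f_*)\;=\;c\,\norm{\bfg_k+\mu(\bfx_*-\bfx_k)}^2+\sigma_5\,h_5(\bfz)+\sigma_7\,h_7(\bfz),\qquad c=\frac{2\epsilon(1-\delta)^2}{L-\mu}\left[\frac{1-\delta}{(1+\delta)^2}-(1-\epsilon)\right].
\]
The linear-in-$f$ coefficients match automatically: $h_7$ supplies $f_k-f_{k+1}$ and $h_5$ supplies $f_*-f_k$, so the coefficients of $f_k,f_{k+1},f_*$ on the right are $1-\sigma_5,\,-1,\,\sigma_5$, coinciding with those on the left once $\sigma_5=1-t$. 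The remaining check is that, after deleting these $f$-terms, the quadratic (Gram) part of $\sigma_5 h_5+h_7$ collapses exactly to $c\,\norm{\bfg_k+\mu(\bfx_*-\bfx_k)}^2$; all monomials involving $\bfx_{k+1}$ or $\bfg_{k+1}$ vanish trivially, since neither $h_5$, $h_7$, nor the left-hand side contains them. Since $h_5$ and $h_7$ only involve $\bfg_k$ and $\bfx_*-\bfx_k$, this amounts to matching just three coefficients ($\norm{\bfg_k}^2$, $\bfg_k^\top(\bfx_*-\bfx_k)$, $\norm{\bfx_*-\bfx_k}^2$), which reduces to a single consistency relation that holds precisely because $\alpha=\tfrac1{2(1-\mu/L)}$; moreover $c>0$, so the squared-norm term is a genuine SOS summand (equivalently, the matrix $Q$ in \eqref{eq:sdp} is PSD), exactly because the hypothesis $\epsilon>1-\tfrac{1-\delta}{(1+\delta)^2}$ forces $\tfrac{1-\delta}{(1+\delta)^2}-(1-\epsilon)>0$ while $L>\mu$.

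I expect the real obstacle to lie not in this identity but in justifying \eqref{eq:goldstein_constraint} itself — that it is a valid constraint for noisy GD with the Goldstein rule on an $L$-smooth $f$ over the stated ranges $\delta\in[0,\sqrt5-2)$, $\epsilon\in\bigl(1-\tfrac{1-\delta}{(1+\delta)^2},\,\tfrac12\bigr)$. This is the content of Appendix~\ref{app:goldstein}: one combines the two noisy Goldstein inequalities \eqref{eq:GDGsN} with the $L$-smoothness quadratic upper bound and the noise bound \eqref{eq:noise}, then optimizes the resulting estimate over the step size $\gamma_k$; the bookkeeping with $\delta$ is where the interval for $\delta$ (equivalently, nonemptiness of the interval for $\epsilon$) arises. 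Separately, one must \emph{discover} the multipliers $(\sigma_5,\sigma_7,t,c)$ in the first place, which is done by solving the SDP \eqref{eq:sdp} numerically over several parameter values, guessing the analytic form, and verifying it symbolically. With the identity established, the conclusion is routine: along any run of noisy GD with the Goldstein rule on $f\in\FmuLRn$ one has $h_5(\bfz)\ge0$ (Theorem~\ref{thm:interpolability}) and $h_7(\bfz)\ge0$ (Appendix~\ref{app:goldstein}), and since $\sigma_5,\sigma_7\ge0$ and $c>0$ the right-hand side is nonnegative, giving $f_{k+1}-f_*\le t(f_k-f_*)$.
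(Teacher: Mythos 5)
Your proposal is correct and takes essentially the same route as the paper: a degree-1 certificate using only $h_5$ (with $\bfg_*=0$) and the Goldstein constraint $h_7$ from Appendix~\ref{app:goldstein}, with $\sigma_7=1$, $t=1-\sigma_5$, and the residual SOS term $c\,\norm{\bfg_k+\mu(\bfx_*-\bfx_k)}^2$ with exactly the $c$ the paper uses. In fact your multiplier $\sigma_5=\frac{4\mu\epsilon(1-\delta)^2}{L}\bigl[\frac{1-\delta}{(1+\delta)^2}-(1-\epsilon)\bigr]$ is the one that makes the coefficient matching (and hence the identity \eqref{eq:main6}) work and agrees with the stated contraction factor, whereas the paper's own proof writes $\sigma_5$ and $t$ with a factor $2$ in place of $4$, which is evidently a typo that your version corrects.
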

\begin{proof}
	Defining $\sigma_5 = \frac{2\mu\epsilon(1-\delta)^2}{L}\left(\frac{1-\delta}{(1+\delta)^2}-(1-\epsilon) \right)$, $\sigma_7= 1$ and 
	$$t = 1 - \frac{2\mu\epsilon(1-\delta)^2}{L}\left(\frac{1-\delta}{(1+\delta)^2}-(1-\epsilon) \right),$$ we have that 
	$t(f_k - f_*) - (f_{k+1} - f_*)$ is equal to
	\begin{equation}\label{eq:main6}
	\frac{2\epsilon(1-\delta)^2\left(\epsilon(1+\delta)^2 - 3\delta-\delta^2\right)}{(L-\mu)(1+\delta)^2}\norm{\bfg_k + \mu(\bfx_* - \bfx_k)}^2 + \sigma_5 h_5(\bfz) + \sigma_7 h_7(\bfz).
		\end{equation}
	The first term in the right-hand-side of equation \eqref{eq:main6} is strictly positive since $\epsilon > 1-\frac{1-\delta}{(1+\delta)^2}$ implies $\epsilon(1+\delta)^2 > \delta^2 + 3\delta$. In addition, any sequence of iterates $\{\bfx_k\}_{k\geq 1}$ generated by noisy GD with the Goldstein rule for minimizing a function $f \in \FmuLRn$ satisfies $h_5(\bfz) \geq 0$ and $h_7(\bfz) \geq 0$. Since $\sigma_5,  \sigma_7 \ge 0$, the right-hand-side of \eqref{eq:main6} is positive. Hence, the left-hand-side of equation \eqref{eq:main6} is also positive, concluding the~proof.
\end{proof}
Lastly, from Theorem \ref{thm:goldstein_result}, we recover the bound for GD with the Goldstein rule in the noiseless case (i.e.,  $\delta = 0$), which is given by
\begin{equation*}
	f_{k+1} - f_* \leq \left(1 - \frac{4\mu\epsilon^2}{L}\right) (f_k - f_*).
\end{equation*}

\subsection{The Wolfe Conditions}
A step size chosen using the Wolfe conditions must satisfy the following two inequalities:
\begin{align}
f(\bfx_k + \gamma_k \bfd_k) &\leq f(\bfx_k) + c_1 \gamma_k \bfd_k^\top \bfg_k \label{eq:suff_decr}\\
\bfg_{k+1}^\top \bfd_k &\geq c_2 \bfd_k^\top \bfg_k, \label{eq:curvature}
\end{align}
where $\bfd_k$ is a descent direction and $0 < c_1 < c_2 < 1$, e.g. see  \cite[Page 37]{wright_numericalopt}. 


For the Wolfe conditions, we only have results for noiseless GD. In this setting, $\bfd_k = -\bfg_k$ and hence equations \eqref{eq:suff_decr}-\eqref{eq:curvature} become
\begin{align}
f(\bfx_k - \gamma_k \bfg_k) &\leq f(\bfx_k) - c_1 \gamma_k \norm{\bfg_k}^2, \label{eq:GDWo1}\\
\bfg_{k+1}^\top \bfg_k &\leq c_2 \norm{\bfg_k}^2. \label{eq:GDWo2}
\end{align}
Next, we show that the following polynomial inequality holds when GD with the Wolfe conditions is applied to an $L$-smooth function:
\begin{equation}\label{eq:wolfe_constraint}
	f_k - f_{k+1} - \frac{c_1(1-c_2)}{L} \norm{\bfg_k}^2 \geq 0.
\end{equation}
Indeed, as $f$ is $L$-smooth we have that
\begin{equation*}
\left(\nabla f(\bfx) - \nabla f(\bfy)\right)^\top \left(\bfx - \bfy \right) \leq L \norm{\bfx-\bfy}^2,
\end{equation*}
which for  $\bfx = \bfx_{k+1} = \bfx_k - \gamma_k \bfg_k$ and $\bfy = \bfx_k$ gives
\begin{equation*}
\left(\bfg_{k+1} - \bfg_k \right)^\top \left( -\gamma_k \bfg_k \right) \leq L \gamma_k^2 \norm{\bfg_k}^2.
\end{equation*}
In turn, this implies that 
\begin{align*}
0 &\leq \bfg_k^\top \bfg_{k+1} + \left(L\gamma_k - 1 \right)\norm{\bfg_k}^2 \\
&\leq \left(L\gamma_k - 1 + c_2\right)\norm{\bfg_k}^2, && \text{by \eqref{eq:GDWo2}}
\end{align*}
which shows that 
\be\label{scsvv}
 \gamma_k \geq \frac{1-c_2}{L}.
\ee
Lastly, \eqref{scsvv} together with \eqref{eq:GDWo1}, gives us
\begin{equation*}
0 \leq f_k - f_{k+1} - \frac{c_1(1-c_2)}{L} \norm{\bfg_k}^2.
\end{equation*}

In the next theorem, the polynomials we use in the search of a {degree-1} SOS certificate \eqref{eq:putinardeg1} are $h_1(\bfz) \geq 0, \dots, h_6(\bfz)\geq 0$ given in \eqref{eq:interpolability}, as well as~\eqref{eq:wolfe_constraint}, denoted by $h_7(\bfz) \geq 0$. Constructing and solving the appropriate SDP, we obtain the following result.
\begin{theorem}\label{thm:wolfe}
	For any $0 < c_1 < c_2 < 1$, given an $(\mu,L)$-smooth function $f:\R^n\to \R$ and any sequence of iterates $\{\bfx_k\}_{k\geq 1}$ generated using GD with an inexact line search satisfying the Wolfe conditions, the~bound
	\begin{equation*}
	f_{k+1} - f_* \leq \left(1 - \frac{2\mu c_1(1-c_2)}{L}\right) (f_k - f_*)
	\end{equation*}
	admits an SOS certificate of degree-1.
\end{theorem}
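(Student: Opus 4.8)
The plan is to follow the template already used for Theorems~\ref{thm:armijo_result} and~\ref{thm:goldstein_result}: produce an explicit degree-1 Putinar certificate of the form~\eqref{eq:putinardeg1} that uses only the single $\FmuL$-interpolability inequality $h_5(\bfz)\ge 0$ (the one with leading terms $f_*-f_k$, in which we set $\bfg_*=\mathbf{0}$ since $\bfx_*$ minimizes the differentiable $f$) together with the algorithmic inequality $h_7(\bfz)\ge 0$ from~\eqref{eq:wolfe_constraint}. First I would record how~\eqref{eq:wolfe_constraint} is obtained (the content relegated to Appendix~\ref{app:wolfe}): the curvature condition~\eqref{eq:GDWo2} gives $(\bfg_{k+1}-\bfg_k)^\top\bfg_k\le -(1-c_2)\norm{\bfg_k}^2$, while $L$-smoothness together with the update $\bfx_{k+1}-\bfx_k=-\gamma_k\bfg_k$ gives $(\bfg_{k+1}-\bfg_k)^\top\bfg_k\ge -\norm{\bfg_{k+1}-\bfg_k}\,\norm{\bfg_k}\ge -L\gamma_k\norm{\bfg_k}^2$; comparing the two yields the step-size lower bound $\gamma_k\ge (1-c_2)/L$ (the case $\bfg_k=\mathbf{0}$ being trivial), and substituting it into the sufficient-decrease condition~\eqref{eq:GDWo1} gives $f_k-f_{k+1}\ge c_1\gamma_k\norm{\bfg_k}^2\ge \tfrac{c_1(1-c_2)}{L}\norm{\bfg_k}^2$, i.e.\ $h_7(\bfz)\ge 0$.

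Next I would write down the candidate certificate, guided by the numerically solved SDP and by the pattern of the two preceding theorems. Set
\[
t=1-\frac{2\mu c_1(1-c_2)}{L},\qquad \sigma_7=1,\qquad \sigma_5=1-t=\frac{2\mu c_1(1-c_2)}{L},
\]
with all remaining multipliers zero and no equality constraints, and then verify the polynomial identity
\[
t(f_k-f_*)-(f_{k+1}-f_*)=\frac{c_1(1-c_2)}{L-\mu}\,\norm{\bfg_k+\mu(\bfx_*-\bfx_k)}^2+\sigma_5\,h_5(\bfz)+\sigma_7\,h_7(\bfz).
\]
Since the first summand is an SOS of linear forms and $\sigma_5,\sigma_7\ge 0$, this identity is exactly a degree-1 certificate in the sense of~\eqref{eq:putinardeg1}. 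Checking it splits the usual way: the part affine in $(f_*,f_k,f_{k+1})$ cancels because the choice $\sigma_5=1-t$ clears the $f_k$- and $f_*$-coefficients while $h_7$ supplies the $-f_{k+1}$; for the quadratic part note that, with $\bfg_*=\mathbf{0}$, neither $h_5$ nor $h_7$ involves $\bfx_{k+1}$ or $\bfg_{k+1}$, so the only surviving quadratic monomials are in $\bfg_k,\bfx_*,\bfx_k$, and the $\alpha$-weighted terms of $\sigma_5 h_5$ (with $\alpha=L/(2(L-\mu))$) together with the $\norm{\bfg_k}^2$ term of $\sigma_7 h_7$ must collapse to $\tfrac{c_1(1-c_2)}{L-\mu}\norm{\bfg_k+\mu(\bfx_*-\bfx_k)}^2$ with no residual. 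By the reduction in Section~\ref{sec:choices} it suffices to verify this for $n=1$, where it is a short coefficient match.

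Finally, the bound follows: since $0<c_1<c_2<1$ the coefficient $c_1(1-c_2)/(L-\mu)$ is positive, so the square term is nonnegative; $h_5(\bfz)\ge 0$ by $\FmuL$-interpolability (Theorem~\ref{thm:interpolability}), $h_7(\bfz)\ge 0$ by the derivation above, and $\sigma_5,\sigma_7\ge 0$; hence the right-hand side is nonnegative and therefore $f_{k+1}-f_*\le t(f_k-f_*)$ with $t=1-\tfrac{2\mu c_1(1-c_2)}{L}$, where one checks $t\in(0,1)$ using $c_1(1-c_2)<c_2(1-c_2)\le 1/4$ and $\mu<L$. I expect the only genuine work to be (i) pinning down the step-size lower bound $\gamma_k\ge(1-c_2)/L$ --- the single place the curvature condition is used, and the reason a fixed step size would not do --- and (ii) the bookkeeping in the quadratic coefficient match that confirms the leftover terms form an exact square; everything else is forced by the template.
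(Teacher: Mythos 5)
Your proposal is correct and follows essentially the same route as the paper: you derive the step-size lower bound $\gamma_k \ge (1-c_2)/L$ from the curvature condition and $L$-smoothness to obtain the algorithmic inequality $h_7(\bfz)\ge 0$, and your degree-1 certificate (with $\sigma_5 = \tfrac{2\mu c_1(1-c_2)}{L}$, $\sigma_7 = 1$, and the square $\tfrac{c_1(1-c_2)}{L-\mu}\norm{\bfg_k+\mu(\bfx_*-\bfx_k)}^2$) is exactly the identity used in the paper's proof. The only cosmetic difference is that you establish the step-size bound via Cauchy--Schwarz with the Lipschitz gradient while the paper applies the inequality $(\nabla f(\bfx)-\nabla f(\bfy))^\top(\bfx-\bfy)\le L\norm{\bfx-\bfy}^2$ directly, which is an equivalent calculation.
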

\begin{proof}
	Defining $\sigma_5 = \frac{2\mu c_1(1-c_2)}{L}$, $\sigma_7= 1$ and $t = 1 - \frac{2\mu c_1(1-c_2)}{L}$, we have
	\begin{equation}\label{eq:main7}
		t(f_k - f_*) - (f_{k+1} - f_*) = \frac{c_1 (1-c_2)}{(L-\mu)}\norm{\bfg_k + \mu(\bfx_* - \bfx_k)}^2 + \sigma_5 h_5(\bfz) + \sigma_7 h_7(\bfz).
	\end{equation}
	Since $\sigma_5$ and $\sigma_7$ are both nonnegative, and since any sequence of iterates $\{\bfx_k\}_{k\geq 1}$ generated by GD with the Wolfe conditions for minimizing a function $f \in \FmuLRn$ satisfies $h_5(\bfz) \geq 0$ and $h_7(\bfz) \geq 0$, the right-hand-side of \eqref{eq:main7} is nonnegative. The left-hand-side of equation \eqref{eq:main7} is also nonnegative, which concludes the proof.
\end{proof}
\section{Conclusions and Future Work}\label{sec:conc}
This paper proposes a new technique for bounding the convergence rates for various algorithms---namely, by searching for SOS certificates. This leads to a hierarchy of SDPs, for which the first level of the hierarchy is dual to the SDP induced by the one-step PEP as discussed in Section~\ref{sec:duality}.
Furthermore, using the first level of the SOS hierarchy,  we  
derive  new bounds for gradient descent  with three popular  inexact line search methods. 

However, our technique does not necessarily produce tight bounds, since it entails two relaxation steps. For one, the constraints characterizing the function class or algorithm may be relaxed. Secondly, we relax the constraint that $p(\bfz)$ be nonnegative to the constraint that $p(\bfz)$ is an SOS. Recall that while SOS implies nonnegativity, the converse is not necessarily true. Proving the tightness of the derived bounds will have to be done via other means.

At present, we have only utilized the first level of the proposed hierarchy by searching for degree-1 certificates.  In  future work, we look to apply the SOS framework to broader function classes, for which exact $\F$-interpolability conditions have not been formulated. In this setting, the SDP formulation of the f-PEP (dual to our degree-1 SOS-SDP) would, in general, not be tight, and going higher up the hierarchy may produce tighter contraction factors (as the degree of the SOS-SDP increases).
Finally, in the instances where SOS certificates cannot be found, it would be desirable to examine why the technique fails to better understand the scope for which this approach may be~applied.

\subsection*{Acknowledgements}
The authors are supported by a Singapore National Research Foundation (NRF) Fellowship (R-263-000-D02-281).


\bibliographystyle{abbrv}

\bibliography{spars_bib}
\raggedbottom

\appendix
\end{document}